\DeclareMathAlphabet\mathcalbf{OMS}{cmsy}{b}{n}
\DeclareMathAlphabet\EuScript{U}{eus}{m}{n}
\DeclareMathAlphabet\EuScriptBold{U}{eus}{b}{n}
\numberwithin{equation}{section}
\newtheorem{theorem}{Theorem}[section]
\newtheorem{lemma}[theorem]{Lemma}
\newtheorem{proposition}[theorem]{Proposition}
\newtheorem{definition}[theorem]{Definition}
\newtheorem{remark}[theorem]{Remark}
\def\C{\mathbb C}
\begin{document}
\allowdisplaybreaks

\title[Commutators of Cauchy--Szeg\H{o} type integrals for domains in $\C^n$]
{Commutators of Cauchy--Szeg\H{o} type integrals for domains  in $\C^n$ with minimal smoothness}

\author{Xuan Thinh Duong, Michael Lacey, Ji Li, Brett D. Wick and Qingyan Wu}

\address{Xuan Thinh Duong, Department of Mathematics, Macquarie University, NSW, 2109, Australia}
\email{xuan.duong@mq.edu.au}

\address{Michael Lacey, School of Mathematics \\
         Georgia Institute of Technology\\
         Atlanta, GA 30332, USA
         }
\email{lacey@math.gatech.edu}

\address{Ji Li, Department of Mathematics, Macquarie University, NSW, 2109, Australia}
\email{ji.li@mq.edu.au}

\address{Brett D. Wick, Department of Mathematics \& Statistics\\
         Washington University - St. Louis\\
         St. Louis, MO 63130-4899 USA
         }
\email{wick@math.wustl.edu}

\address{Qingyan Wu, Department of Mathematics\\
         Linyi University\\
         Shandong, 276005, China
         }
\email{wuqingyan@lyu.edu.cn}

\subjclass[2010]{30E20, 32A50, 32A55, 32A25, 42B20, 42B35}
\keywords{Cauchy type integrals, domains in $\C^n$, BMO space, VMO space, commutator}

\begin{abstract}
In this paper we study the commutator of Cauchy type integrals $\EuScript C$  on a bounded strongly pseudoconvex domain $D$ in $\C^n$ with boundary $bD$ satisfying the minimum regularity condition $C^{2}$ as in the recent result of Lanzani--Stein. We point out that in this setting the Cauchy type integrals $\EuScript C$ is the sum of the essential part $\EuScript C^\sharp$ which is a Calder\'on--Zygmund operator and a remainder $\EuScript R$ which is no longer a Calder\'on--Zygmund operator. We show that the commutator $[b, \EuScript C]$  is bounded on $L^p(bD)$ ($1<p<\infty$)  if {\color{black}and only if}\ $b$ is in the BMO space on $bD$.  Moreover, the commutator $[b, \EuScript C]$  is compact on $L^p(bD)$ ($1<p<\infty$)  if {\color{black}and only if}\ $b$ is in the VMO space on $bD$.  Our method can also be applied to the commutator of Cauchy--Leray integral  in a bounded, strongly $\C$-linearly convex domain $D$ in $\C^n$ with the boundary $bD$ satisfying the minimum regularity $C^{1,1}$. Such a Cauchy--Leray integral is a Calder\'on--Zygmund operator as proved in the recent result of Lanzani--Stein. We also point out that our method provides another proof of the boundedness and compactness of commutator of Cauchy--Szeg\H o operator on a bounded strongly pseudoconvex domain $D$ in $\C^n$ with smooth boundary (first established by Krantz--Li).

\end{abstract}

\maketitle


\section{Introduction and statement of main results}
\setcounter{equation}{0}

The theory of Hardy spaces originated from the study of functions on the complex plane.
Denote the open unit disc in the complex plane by $\mathbb{D}=\{ z\in\mathbb{C}:\ |z|<1\}$.
We recall that the classical Hardy space $H^p$, $0<p<\infty$, on $\mathbb{D}$
 is defined as the space of holomorphic functions $f$ that satisfy $ \|f\|_{H^p(\mathbb{D})} < \infty$, where
$$  \|f\|_{H^p(\mathbb{D})}:=\sup_{0\leq r<1} \Big( {1\over 2\pi}\int_0^{2\pi}  \big|f(re^{it})\big|^p dt \Big)^{1\over p}.  $$
It is easy to check that the pointwise product of two
 $H^2(\mathbb{D})$ functions is a function in the Hardy space $H^1(\mathbb{D})$. The converse is not obvious but actually is  true and  we have  the important
 {\it   Riesz factorization theorem}:
 {\it ``A function $f$ is in $H^1(\mathbb{D})$ if and only if there exist $g,h\in H^2(\mathbb{D})$ with
 $f= g\cdot h$ and $\|f\|_{H^1(\mathbb{D})} =\|g\|_{H^2(\mathbb{D})}\|h\|_{H^2(\mathbb{D})}$.''
  }

A similar result holds for the  Hardy space $H^1$ on the unit circle $\mathbb {T} = \{ z\in\mathbb{C}:\ |z|= 1\}$. This factorisation has a key role in 
proving the equivalence of norms
 $$ \|[b,H]\|_{L^2(\mathbb{T})\to L^2(\mathbb{T})} \approx \|b\|_{\rm BMO(\mathbb T)}, $$
 where $[b,H](f)=b H(f) - H(bf)$ is the commutator of  a BMO function $b$ and the Hilbert transform $H$ on the unit circle. We note that this result can be interpreted
through Hankel operators, and one then recovers a famous result of Nehari \cite{N}.
See \cite{L} for the history and literature of the Nehari theorem. See also  \cite{CRW} for the norm equivalence $ \|[b,R_j]\|_{L^2(\mathbb{R}^n)\to L^2(\mathbb{R}^n)} \approx \|b\|_{{\rm BMO}(\mathbb R^n)}, $
where $R_j$ $(j=1,\ldots,n)$ is the $j$th Riesz transform on the Euclidean space $\mathbb R^n$, and \cite{CLMS} for the application of commutator to certain version of div-curl lemmas.

Related estimates on commutators have been studied extensively in different settings, see for example  \cite{B, DHLWY, DLLW,DLWY, GLW, HLW, Hy, IR, LOR0, LOR, LNWW,LW,P,TYY, U, DGKLWY}
and the references therein.

We now recall the extension of this fundamental commutator result to the setting of several complex variables.
Let $D$ be a bounded domain in $\C^n$ with $C^2$ boundary $bD$, $d\sigma$ the Lebesgue surface measure on $bD$ and $L^p(bD)$ the usual Lebesgue space on $bD$ with respect to the measure $d\sigma$. Let $H^p(D)$ be the holomorphic Hardy spaces defined in \cite{KRA2,STE1}. Fatou's theorem \cite{KRA1} shows that, for any $0 < p \leq\infty$, a holomorphic function $f\in H^p(D)$ has a radial limit at almost all points on $bD$. It then follows from the maximum principle that one can identify $H^p(D)$ as a closed subspace of $L^p(bD)$. Let $S: L^2(bD)\to H^2(D)$ be the orthogonal projection via the reproducing kernel $S(z,w)$ which is known as the Cauchy--Szeg\H o kernel. For a number of  special cases and classes of domains $D$, we may identify the operator $S$ as a singular integral operator on $bD$; in fact, in many instances the Cauchy--Szeg\H o kernel $S(z,w)$  is $C^\infty$ on $bD\times bD\backslash \{z=w\}$.

Recall that when $D$ is a strictly pseudoconvex domain in $\C^n $ with smooth boundary,
let $T_S$ be the singular integral associated with the Cauchy--Szeg\H o kernel, in fact one has $S(f)(z)={1\over 2}f(z)+cT_S(f)$ for almost every $z\in bD$. 
  Krantz and Li \cite{KL2} first proved the following result regarding the boundedness of the commutator of $T_S$ with respect to the BMO space on the boundary $bD$,  
 as well as the compactness of commutator with respect to the VMO space. Here the BMO and VMO spaces were studied in \cite{KL1}.  To be more precise, 

\smallskip
\noindent{\bf Theorem A\,}(\cite{KL2}){\bf .} {\it Let  $D$ be a bounded strictly pseudoconvex domain in $\C^n $ with smooth boundary and let $b\in L^1(bD)$. Then for $1<p<\infty$,

{\rm(i)} 
$b\in {\rm BMO}(bD)$ if and only if $[b,T_S]$ is bounded on $L^p(bD)$.

{\rm(ii)} $b\in {\rm VMO}(bD)$ if and only if $[b,T_S]$ is compact on $L^p(bD)$.
}

\smallskip
We note that in the study of boundedness of the commutator $[b,T_S]$, the regularity of the kernel $S(z,w)$ plays a key role. In the proof of Theorem A, it follows from the results in \cite{BMS, FEF, NRSW} that
the Cauchy--Szeg\H o kernel $S(z,w)\in C^\infty(bD\times bD \backslash \{(z,z):\ z\in bD\})$ and is a standard Calder\'on--Zygmund kernel, hence the boundedness and compactness of the commutator follows from the standard approach as in \cite{CRW} with suitable modifications. Conversely, when $[b, T_S]$ is bounded on $L^p(bD)$, to show $b\in {\rm BMO}(bD)$,
Krantz--Li used the fact that  the reciprocal of the Cauchy--Szeg\H o kernel, ${1\over S(z,w)}$ has
 a decomposition as a finite sum of holomorphic functions.
 Hence, by writing $1 =S(z,w) \times {1\over S(z,w)} $ and by the decomposition of ${1\over S(z,w)}$, they could link the BMO norm to the commutator with some additional algebra. By using this technique they also showed that  when $[b,T_S]$ is compact, $b\in {\rm VMO}(bD)$.

\smallskip
Recently, Lanzani and Stein \cite{LS}
 studied the Cauchy--Szeg\H o projection operator in a bounded domain $D$ in $\C^n$ which is strongly pseudoconvex and its boundary $bD$ satisfies the minimum regularity condition of class $C^2$. The measure that they used on the boundary $bD$ is the Leray--Levi measure $d\lambda$ 
(for the details we refer to Section 2 below). They obtained the $L^p(bD)$ boundedness  ($1<p<\infty$){\  of a family of Cauchy integrals $\{\EuScript C_\epsilon\}_\epsilon$, and since the role of the parameter $\epsilon$ is of no consequence here, when denoting a member in this family we will simply write $\EuScript C$}. Here the space $L^p(bD)$ is with respect to $d\lambda$. 
We point out that the kernel of these Cauchy integral operators
 do not satisfy the standard size or smoothness conditions
for Calder\'on--Zygmund operators. To obtain the $L^p(bD)$ boundedness, 
they decomposed the Cauchy transform $\EuScript C$  which is the restriction of such a Cauchy integral on $bD$ into the essential part $\EuScript C^\sharp$ and the remainder $\EuScript R$, i.e.,
$$   \EuScript C =  \EuScript C^\sharp+ \EuScript R,  $$
where the kernel of $\EuScript C^\sharp$, denoted by $C^\sharp(w,z)$, satisfies the standard size and smoothness conditions
for Calder\'on--Zygmund operators, i.e. there exists a positive constant $A_1$ such that for every $w,z\in bD$ with $w\not=z$,
\begin{align}\label{gwz}
\left\{
                \begin{array}{ll}
                  a)\ \ |C^\sharp(w,z)|\leq A_1 {\displaystyle1\over\displaystyle{\tt d}(w,z)^{2n}};\\[5pt]
                  b)\ \ |C^\sharp(w,z) - C^\sharp(w',z)|\leq A_1 {\displaystyle {\tt d}(w,w')\over \displaystyle {\tt d}(w,z)^{2n+1} },\quad {\rm if}\ {\tt d}(w,z)\geq c{\tt d}(w,w');\\[5pt]
                  c)\ \ |C^\sharp(w,z) - C^\sharp(w,z')|\leq A_1  {\displaystyle {\tt d}(z,z')\over \displaystyle {\tt d}(w,z)^{2n+1} },\quad {\rm if}\ {\tt d}(w,z)\geq c {\tt d}(z,z')
                \end{array}
              \right.
\end{align}
for an appropriate constant $c>0$ and where ${\tt d}(z,w)$ is a quasi-distance suitably adapted to $D$. And hence, the $L^p(bD)$ boundedness  ($1<p<\infty$) of $\EuScript C^\sharp$ follows from a version of $T(1)$ theorem.  However, the kernel $R(w,z)$ of $\EuScript R$ satisfies a size condition and a smoothness condition for only one of the variables as follows: there exists
a positive constant $C_R$ such that for every $w,z\in bD$ with $w\not=z$,
\begin{align}\label{cr}
\left\{
                \begin{array}{ll}
                  d)\ \ |R(w,z)|\leq C_R {\displaystyle1\over \displaystyle{\tt d}(w,z)^{2n-1}};\\[5pt]
                  e)\ \ |R(w,z)-R(w,z')|\leq C_R {\displaystyle{\tt d}(z,z')\over\displaystyle {\tt d}(w,z)^{2n}},\quad {\rm if\ }  {\tt d}(w,z)\geq c_R {\tt d}(z,z')
                \end{array}
              \right.
\end{align}
for an appropriate large constant $c_R$. It is worth pointing out that in the size condition and smoothness condition above, the dimensions are strictly smaller than the homogeneous dimension $2n$ of the boundary $bD$.  The $L^p(bD)$ boundedness  ($1<p<\infty$) of $\EuScript R$ follows from Schur's lemma. It is also worth to point out that the hypothesis of minimal smoothness is sharp, see more explanations and counterexamples  in \cite{LS2017} when the boundary $bD$ does not satisfy the $C^2$ smoothness.

Thus, along the literature of Nehari, {Coifman}--Rochberg--Weiss, Krantz--Li, it is natural to study the behavior of
the commutator of Cauchy type integrals as studied by Lanzani--Stein (\cite{LS}), which is not a standard Calder\'on--Zygmund operator,  with a BMO function for a bounded strongly pseudoconvex domain in $\C^n$ with minimal smoothness.

The main result of our paper is on the commutator of Cauchy transform  $\EuScript C$
(as in \cite{LS}). 
\begin{theorem}\label{cauchy}
Suppose $D\subset \mathbb C^n$, $n\geq 2$, is  a bounded domain whose boundary is of class $C^2$ and is strongly pseudoconvex.
Suppose $b\in L^1(bD, d\lambda)$. Then for $1<p<\infty$,

$(1)$  $b\in{\rm BMO}(bD,d\lambda)$ if and only if the commutator $[b, \EuScript C]$ is bounded on  $L^p(bD, d\lambda)$.

$(2)$   $b\in{\rm VMO}(bD,d\lambda)$ if and only if the commutator $[b, \EuScript C]$ is compact on  $L^p(bD, d\lambda)$. 
\end{theorem}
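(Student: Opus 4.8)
The plan is to exploit the decomposition $\EuScript C=\EuScript C^\sharp+\EuScript R$ together with the fact that $(bD,{\tt d},d\lambda)$ is a space of homogeneous type of dimension $2n$, so that the standard real‑variable toolkit (maximal functions, John--Nirenberg, the $T(1)$ theorem, Hardy spaces, $H^1$--BMO duality) is available, with $\mathrm{BMO}$ and $\mathrm{VMO}$ as in \cite{KL1}. For the ``if'' directions I would bound $[b,\EuScript C^\sharp]$ and $[b,\EuScript R]$ separately. For the ``only if'' directions I would instead work with $\EuScript C$ as a whole, using that it reproduces holomorphic functions and a non‑degeneracy of the essential kernel $C^\sharp$; since $|R(w,z)|\lesssim {\tt d}(w,z)^{-(2n-1)}={\tt d}(w,z)\cdot{\tt d}(w,z)^{-2n}$, the remainder $\EuScript R$ is a negligible perturbation of $\EuScript C^\sharp$ at small scales, which is the only range that matters for characterising $\mathrm{BMO}/\mathrm{VMO}$ on the bounded set $bD$.

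\emph{Sufficiency.} Since $\EuScript C^\sharp$ satisfies the Calder\'on--Zygmund estimates \eqref{gwz} and is bounded on $L^p(bD,d\lambda)$ (via the $T(1)$ theorem, \cite{LS}), the Coifman--Rochberg--Weiss argument \cite{CRW} carried out on the space of homogeneous type $bD$ gives $\|[b,\EuScript C^\sharp]\|_{L^p\to L^p}\lesssim\|b\|_{\mathrm{BMO}(bD)}$; and for $b\in\mathrm{VMO}(bD)$ one approximates $b$ in BMO norm by ${\tt d}$-Lipschitz functions $b_k$, for which the kernel $(b_k(w)-b_k(z))C^\sharp(w,z)$ has the \emph{integrable} singularity ${\tt d}(w,z)^{-(2n-1)}$, so $[b_k,\EuScript C^\sharp]$ is compact (truncate the kernel at scale $\delta$: the truncation is compact on $L^p$ by Schur's lemma and Arzel\`a--Ascoli, and the tail has $L^p\to L^p$ norm $\lesssim\delta\to0$), whence $[b,\EuScript C^\sharp]$ is compact as a norm limit. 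For the remainder, \eqref{cr} lets one treat $\EuScript R$ as a fractional integral of order $1$: $\EuScript R\colon L^p(bD)\to L^q(bD)$ with $\tfrac1q=\tfrac1p-\tfrac1{2n}$ (Hardy--Littlewood--Sobolev on $bD$); writing $[b,\EuScript R]f=b\,\EuScript Rf-\EuScript R(bf)$ and applying H\"older's inequality with this gain yields $\|[b,\EuScript R]\|_{L^p\to L^p}\lesssim\|b\|_{L^{q_0}(bD)}\lesssim\|b\|_{\mathrm{BMO}(bD)}+\|b\|_{L^1(bD)}$ for a suitable finite $q_0$ (John--Nirenberg on the bounded set $bD$; the exponent endpoints are covered by interpolation and by $\EuScript R\colon L^p(bD)\to L^\infty(bD)$ for $p>2n$), and compactness of $[b,\EuScript R]$ for $b\in\mathrm{VMO}(bD)$ follows by the same ${\tt d}$-Lipschitz approximation, the relevant kernel now carrying the milder singularity ${\tt d}(w,z)^{-(2n-2)}$. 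Adding the two pieces proves the ``if'' halves of $(1)$ and $(2)$.

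\emph{Necessity.} Here the structural input is that the Cauchy integral reproduces holomorphic functions, $\EuScript Cg=g$ for $g\in H^p(D)$, so that $[b,\EuScript C]g=b\,\EuScript Cg-\EuScript C(bg)=(I-\EuScript C)(bg)$ (in particular the remainder $\EuScript R$ is absorbed here); pairing this against $h\perp H^2(D)$ kills $\EuScript C(bg)$ and leaves $\langle[b,\EuScript C]g,h\rangle=\langle bg,h\rangle$, i.e.\ the pairing of $b$ against $g\overline h\in H^1(D)$, a product of two holomorphic $L^2$ functions. Combined with a weak factorisation of the Hardy space $H^1$ on $bD$ by such products (the space‑of‑homogeneous‑type counterpart of the Riesz factorisation recalled in the Introduction) and with $H^1$--BMO duality on $bD$, summing a factorisation $F=\sum_k g_k\overline{h_k}$ of a test element $F$ gives $|\langle b,F\rangle|\lesssim\|[b,\EuScript C]\|_{L^p\to L^p}\|F\|_{H^1(bD)}$; taking the supremum over $\|F\|_{H^1(bD)}\le1$ — together with the companion computation that tests $[b,\EuScript C]$ on anti‑holomorphic functions so as to capture the holomorphic part of $b$ — yields $\|b\|_{\mathrm{BMO}(bD)}\lesssim\|[b,\EuScript C]\|_{L^p\to L^p}$, which is the ``only if'' half of $(1)$. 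Equivalently one may run the (approximate) weak factorisation for operators with non‑degenerate kernels directly: for every small $r$ and every ball $B=B(w_0,r)\subseteq bD$ there is a ``twin'' ball $\widetilde B$ of radius $\sim r$ with ${\tt d}(B,\widetilde B)\sim r$ on which $|C^\sharp(w,z)|\sim r^{-2n}$ and $\arg C^\sharp(w,z)$ stays within a fixed sector of aperture $<\pi$ — this comes from the Cauchy--Fantappi\`e structure of $\EuScript C^\sharp$ in \cite{LS} and the Levi‑polynomial geometry of $D$ — and $C=C^\sharp+R$ inherits it for $r$ small since $|R|\lesssim r^{-(2n-1)}\ll r^{-2n}$ there; balls of radius bounded below contribute only $\lesssim\|b\|_{L^1(bD)}$. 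For $(2)$: if $[b,\EuScript C]$ were compact but $b\notin\mathrm{VMO}(bD)$, the same construction produces $L^p$-normalised sequences $g_k$ supported on balls of radius $\to0$, hence weakly null in $L^p(bD)$, with $\|[b,\EuScript C]g_k\|_{L^p}\gtrsim\varepsilon_0>0$, contradicting the weak‑to‑norm continuity of compact operators; hence $b\in\mathrm{VMO}(bD)$.

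\emph{Main obstacle.} The crux is the non‑degeneracy of $C^\sharp$ near the diagonal — equivalently, the weak factorisation of $H^1(bD)$ adapted to $\EuScript C$ — in the regime of \emph{minimal}, merely $C^2$, boundary regularity: one must unwind the construction of $\EuScript C^\sharp$ in \cite{LS} and the geometry of strongly pseudoconvex domains far enough to pin down the leading term of $C^\sharp$ near the diagonal with a lower bound and a control of its argument that are uniform over $bD$, and to check that neither the low regularity nor the lower‑order remainder $\EuScript R$ destroys it. Everything else — the Coifman--Rochberg--Weiss upper bounds, the fractional‑integral estimates for $[b,\EuScript R]$, and the (approximate) weak‑factorisation argument — is routine once this input is in place. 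The same scheme applies verbatim to the Cauchy--Leray integral of \cite{LS} (there $\EuScript R=0$, so one only needs the non‑degeneracy of the Cauchy--Leray Calder\'on--Zygmund kernel) and, specialised to the Cauchy--Szeg\H o projection on a smooth strongly pseudoconvex domain, recovers the theorem of Krantz--Li \cite{KL2}.
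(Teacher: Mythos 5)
Your overall architecture matches the paper's closely: the decomposition $\EuScript C=\EuScript C^\sharp+\EuScript R$, Calder\'on--Zygmund machinery for $[b,\EuScript C^\sharp]$, a separate and softer argument for $[b,\EuScript R]$, and for necessity a lower bound extracted from the non-degeneracy of the essential kernel near the diagonal (with $\EuScript R$ absorbed because $|R|\lesssim {\tt d}^{-(2n-1)}\ll {\tt d}^{-2n}$ at small scales). Two of your choices are genuine simplifications of what the paper actually does, and both are correct. First, for the upper bound on $[b,\EuScript R]$ you use a fractional-integral (Hardy--Littlewood--Sobolev) gain $\EuScript R\colon L^p\to L^q$ with $1/q=1/p-1/(2n)$ plus H\"older with $\|b\|_{L^{2n}}\lesssim\|b\|_{{\rm BMO}}+\|b\|_{L^1}$, whereas the paper runs a sharp-maximal-function and John--Nirenberg argument that also uses the regularity condition e) in \eqref{cr}; your route needs only the size bound d) and is cleaner. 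Second, for ``compact $\Rightarrow$ VMO'' you observe that the normalised test functions supported on balls of radius $\to0$ are weakly null in the reflexive space $L^p(bD,d\lambda)$, so the uniform lower bound on $\|[b,\EuScript C]g_k\|_{L^p}$ contradicts complete continuity of a compact operator; the paper instead passes to a disjointly supported subsequence and uses the $\ell^p$-basis contradiction ($Te_j=Te_k\ne 0$ impossible). Your version is shorter and just as rigorous.

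The one place where you should not treat the two routes you describe as interchangeable is the ``bounded $\Rightarrow$ BMO'' direction. The Hankel/Nehari route ($\EuScript Cg=g$ on $H^p$, pair against $h\perp H^2$, weak factorisation of $H^1(bD)$ by products $g\overline h$, then $H^1$--BMO duality) requires a weak factorisation theorem for $H^1(bD)$ in the $C^2$ minimal-regularity setting, which is not available in the literature and would essentially be an equivalent theorem to the one being proved; invoking it here is circular. Only the twin-ball route works at this regularity, and that is what the paper does: fix the constants so that for every small ball $B=B_r(z_0)$ there is $\tilde B=B_r(w_0)$ at distance $\sim r$ on which one of $\operatorname{Re}C^\sharp$ or $\operatorname{Im}C^\sharp$ has constant sign and a lower bound $\gtrsim{\tt d}(w,z)^{-2n}$, uniform enough to survive adding $R$. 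But knowing this lower bound is not yet the end of the argument. The step you label ``routine'' — passing from the kernel lower bound on $B\times\tilde B$ to the BMO lower bound — requires the median-value decomposition of Uchiyama and Lerner--Ombrosi--Rivera-R\'ios: one picks a median $m_b(\tilde B)$, splits $\tilde B$ into $F_1,F_2$ and $B$ into $E_1,E_2$ so that $b(z)-b(w)$ has a fixed sign on $E_1\times F_1$ and on $E_2\times F_2$ and dominates $|b(z)-m_b(\tilde B)|$, and only then can one bring the absolute value outside the integral $\int_{F_i}C(w,z)(b(z)-b(w))\,d\lambda(w)=[b,\EuScript C](\chi_{F_i})(z)$ and test against $\chi_{F_i}$. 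Without this device the argument does not close, because $C^\sharp$ has no sign-invariance under dilations/reflections of the kind that makes the classical Coifman--Rochberg--Weiss lower bound routine. You correctly flagged the kernel non-degeneracy as the crux, but the median-value mechanism is a second, independent ingredient that should be spelled out, and it also underlies the construction of the test functions $g_k$ in the compactness direction.
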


\color{black}

We first point out that the method in \cite{KL2} does not work in this setting, since in general there is no information about the reciprocal of the kernel of  Cauchy transform  $\EuScript C$.

To obtain the necessary condition in (1) above, we use the decomposition $\EuScript C = \EuScript C^\sharp + \EuScript R$ from \cite{LS}. Then when $b$ is in ${\rm BMO}(bD,d\lambda)$, we study the boundedness of commutators $[b,\EuScript C^\sharp]$ and $[b,\EuScript R]$, respectively.
For $[b,\EuScript C^\sharp]$,  the  upper bound follows directly from the known result in \cite[Theorerm 3.1]{KL1} since $\EuScript C^\sharp$ is a Calder\'on--Zygmund operator.
For $[b,\EuScript R]$, although the kernel of $\EuScript R$ does not satisfies the smoothness condition for the first variable and the dimension of the size condition does not match the homogeneous dimension,  we can still obtain the upper bound by using the condition that $D$ is a bounded domain and using the sharp maximal function and the John--Nirenberg inequality together with a suitable decomposition of the underlying space $bD$. 
Thus, combining the above two results,  we obtain the upper bound of the commutator of $[b,\EuScript C]$.

To prove the sufficient condition in (1) above, we first point out that comparing to the previous results such as in \cite{U,LOR,DLLW,TYY}, the kernel of the operator here does not satisfy the conditions such as  dilation invariance or sign invariance in a collection of chosen balls. Hence  we make good use of the explicit kernel condition of the essential part $\EuScript C^\sharp$
and the upper bound of the kernel of the remainder $\EuScript R$ as in $d)$ above, and then combine an idea from \cite{U} (see also \cite{LOR}) of using the median value for the definition of BMO space instead of average, and exploiting a suitable decomposition of the underlying domain to match the kernel condition.

To obtain the necessary condition in (2) above, again we point out that since $\EuScript C^\sharp$ is a Calder\'on--Zygmund operator, the proof follows from \cite{KL2}. It suffices to prove that $[b,\EuScript R]$ is compact when $b\in{\rm VMO}(bD,d\lambda)$. This follows from a standard approach via Ascoli--Arzela theorem, together with the specific conditions on the kernel of $\EuScript R$.

To prove the sufficient condition in (2) above, we note that the classical approach of \cite{U} does not apply since $\EuScript C$ is no longer a Calder\'on--Zygmund operator. To verify that $b\in{\rm VMO}(bD,d\lambda)$ when $[b,\EuScript R]$ is compact, the key steps are the following: i) our approach in the proof of the sufficient condition in (1) by using a suitable modification of decompositions; ii) a fundamental fact that there is no bounded operator $T \;:\; \ell^{p} (\mathbb N) \to \ell^{p} (\mathbb N)$ with $Te_{j } = T e_{k} \neq 0$ for all $j,k\in \mathbb N$.  Here, $e_{j}$ is the 
standard basis for $\ell^{p} (\mathbb N)$. It is worth to point out that
this proof here is new in the literature of compactness of commutators.

\begin{remark}\label{rem1}
In Theorem \ref{cauchy} we used the Leray--Levi measure $d\lambda$ for the $L^p$, BMO and VMO spaces. However, we point out that the result also holds for the $L^p$, BMO and VMO spaces
with respect to any measure $d\omega$ on $bD$ of the form $d\omega = \omega d\sigma$, where 
the density $\omega$ is a strictly positive continuous function on $bD$ $($as stated in \cite[Section 1.2]{LS}$)$. We provide the explanation after the proof of Theorem \ref{cauchy}.
\end{remark}

\begin{remark}\label{rem1 family}
Recall that the Cauchy integrals studied in \cite{LS} are a family of operators, and we only consider one member from this family.  However, in \cite{LS} they also pointed out that, if we work with a family of Cauchy integrals $\{\EuScript C_\epsilon\}_\epsilon$ parameterized by $0<\epsilon< \epsilon_0$, then by splitting $\EuScript C_\epsilon =  \EuScript C^\sharp_\epsilon+ \EuScript R_\epsilon$, the operator norm $\|\EuScript R_\epsilon\|_{L^p(bD,d\lambda)\to L^p(bD,d\lambda)} $ can go to $\infty$ as $\epsilon\to0$. Here we also point out that, this fact will affect the $L^p(bD,d\lambda)\to L^p(bD,d\lambda)$ norm of the commutator $[b, \EuScript C_\epsilon]$ as well. We provide the explanation in details at the end of Subsection 2.3.
\end{remark}

We also consider the Cauchy--Leray integral in the setting of Lanzani--Stein  \cite{LS2014}, where they studied the 
Cauchy--Leray integral in a bounded domain $D$ in $\C^n$ which is strongly $\C$-linearly convex and the boundary $bD$ satisfies the minimum regularity $C^{1,1}$  (for the details we refer to Section 3 below). They obtained the $L^p(bD)$ boundedness ($1<p<\infty$) of the  Cauchy--Leray transform $\mathcal C$ by showing that the kernel $K(w,z)$ of $\mathcal C$ satisfies the standard size and smoothness conditions of Calder\'on--Zygmund operators as in a), b) and c) above (for details of these definitions and notation, we refer the readers to Section 3), and 
that  $\mathcal C$ satisfies a  suitable version of $T(1)$ theorem.  Following a similar approach as in the proof for Theorem \ref{cauchy}, we arrive at the second main result of this paper on the commutator of the Cauchy--Leray transform (as in \cite{LS2014}).
\begin{theorem}\label{Cauchy-Leray}
Let $D$ be a bounded domain in $\mathbb C^n$ of class $C^{1,1}$ that is strongly $\mathbb C$-linearly convex and let $b\in L^1(bD, d\lambda)$. 
Let $\mathcal C$ be the Cauchy--Leray transform $($as in \cite{LS2014}$)$. Then for $1<p<\infty$, 

$(1)$  $b\in{\rm BMO}(bD,d\lambda)$ if and only if 
the commutator $[b, \mathcal C]$ is bounded on  $L^p(bD, d\lambda)$.  

$(2)$  $b\in{\rm VMO}(bD,d\lambda)$ if and only if 
the commutator $[b, \mathcal C]$ is compact on  $L^p(bD, d\lambda)$.  

\end{theorem}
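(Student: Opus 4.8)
The plan is to transport the proof of Theorem~\ref{cauchy} to the Cauchy--Leray setting, taking advantage of the fact that here the situation is strictly simpler: by Lanzani--Stein \cite{LS2014} (see Section~3), the Cauchy--Leray transform $\mathcal C$ is itself a Calder\'on--Zygmund operator on $bD$ equipped with the relevant quasi-distance ${\tt d}$ and the Leray--Levi measure $d\lambda$ (which together make $bD$ a space of homogeneous type), its kernel $K(w,z)$ satisfying the standard size and smoothness conditions a), b), c), and there is no remainder operator $\EuScript R$ to contend with. Thus the whole scheme is that of the proof of Theorem~\ref{cauchy} with $\EuScript C^\sharp$ replaced by $\mathcal C$ and $\EuScript R\equiv 0$; in particular no analogue of the sharp-maximal-function/John--Nirenberg treatment of $[b,\EuScript R]$ is required.

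\emph{Necessity in (1) and (2).} Since $\mathcal C$ is a Calder\'on--Zygmund operator on the space of homogeneous type $(bD,{\tt d},d\lambda)$, the boundedness of $[b,\mathcal C]$ on $L^p(bD,d\lambda)$ for $b\in{\rm BMO}(bD,d\lambda)$ follows at once from \cite[Theorem~3.1]{KL1}; and the compactness of $[b,\mathcal C]$ on $L^p(bD,d\lambda)$ for $b\in{\rm VMO}(bD,d\lambda)$ follows from the argument of Krantz--Li \cite{KL2}: approximate $b$ in the BMO norm by continuous functions $b_k$, observe that $[b_k,\mathcal C]$ is compact (it is approximated in operator norm by integral operators with continuous kernels on $bD\times bD$, the uniform continuity of $b_k$ controlling the part of the kernel $(b_k(w)-b_k(z))K(w,z)$ near the diagonal), and pass to the operator-norm limit using the bound of the previous sentence.

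\emph{Sufficiency in (1).} Assume $[b,\mathcal C]$ is bounded on $L^p(bD,d\lambda)$. We run the argument used for the sufficiency half of Theorem~\ref{cauchy}(1). Fix a boundary ball $B=B(w_0,r)$; using the explicit form of the Cauchy--Leray kernel together with a), b), c), select a companion ball $\widetilde B=B(z_0,r)$ with ${\tt d}(w_0,z_0)\approx r$ on which $K(w,z)$ is non-degenerate (of size comparable to $\lambda(B)^{-1}$) and slowly varying for $(w,z)\in B\times\widetilde B$. Testing $[b,\mathcal C]$ against $\mathbf 1_{\widetilde B}$ modulated by the sign pattern attached to the median value $m_b(B)$ of $b$ over $B$ (splitting $B=\{b>m_b(B)\}\cup\{b\le m_b(B)\}$, following Uchiyama \cite{U} and Lerner--Ombrosi--Rivera-R\'ios \cite{LOR}), and organizing the tail via a dyadic-annular decomposition of $bD$ around $\widetilde B$ whose errors are summed by b), c), one obtains
\[
\frac{1}{\lambda(B)}\int_B\bigl|b-m_b(B)\bigr|\,d\lambda\;\lesssim\;\bigl\|[b,\mathcal C]\bigr\|_{L^p(bD,d\lambda)\to L^p(bD,d\lambda)}.
\]
Since on a space of homogeneous type control of the median oscillation is equivalent to control of the BMO norm, this gives $b\in{\rm BMO}(bD,d\lambda)$.

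\emph{Sufficiency in (2), and the main obstacle.} Assume $[b,\mathcal C]$ is compact on $L^p(bD,d\lambda)$ and, towards a contradiction, that $b\notin{\rm VMO}(bD,d\lambda)$, so that there are $\delta>0$ and boundary balls $B_j$ of radii tending to $0$ with mean oscillation of $b$ on $B_j$ at least $\delta$. Applying a suitably modified version of the construction from the sufficiency half of (1) to the $B_j$ (and their companions $\widetilde B_j$), and passing to a subsequence, one produces a normalized, weakly null family of test functions on which $[b,\mathcal C]$ does not tend to $0$ in norm; more precisely, one reduces to the impossibility of a bounded operator $T:\ell^p(\mathbb N)\to\ell^p(\mathbb N)$ with $Te_j=Te_k\ne 0$ for all $j,k\in\mathbb N$, the elementary fact recorded in the Introduction, which contradicts the compactness of $[b,\mathcal C]$; hence $b\in{\rm VMO}(bD,d\lambda)$. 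The technical heart of the whole argument, and the only place where the geometry of a $C^{1,1}$ strongly $\mathbb C$-linearly convex domain enters beyond invoking \cite{LS2014}, is the sufficiency half of (1): one must extract from the explicit Cauchy--Leray kernel the quantitative non-degeneracy on companion balls and arrange the decomposition of $bD$ so that the error terms produced by b), c) are summable. The sufficiency half of (2) is the conceptually new ingredient, but it bootstraps from (1) together with the $\ell^p(\mathbb N)$ fact exactly as in the proof of Theorem~\ref{cauchy}.
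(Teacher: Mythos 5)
Your proposal is correct and follows the paper's own route: since $\mathcal C$ is itself a Calder\'on--Zygmund operator on the space of homogeneous type $(bD,{\tt d},d\lambda)$ with no remainder, the necessity halves are inherited directly from Krantz--Li (\cite{KL1} for boundedness, \cite{KL2} for compactness), and the sufficiency halves rerun the median-value/companion-ball construction and the $\ell^p(\mathbb N)$ contradiction exactly as in the proof of Theorem~\ref{cauchy}, using the non-degeneracy properties $\mathfrak c)$/$\mathfrak d)$ of $\operatorname{Re}K$ or $\operatorname{Im}K$. One small superfluity: in the sufficiency half of (1) no dyadic-annular ``tail'' decomposition of $bD$ is needed, because the commutator is tested against $\chi_{F_1}$ with $F_1\subset\widetilde B$, so the integral $\int_{F_1}K(w,z)(b(z)-b(w))\,d\lambda(w)$ is already confined to $B\times\widetilde B$ where the single-signed lower bound holds, and there are no error terms to sum.
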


{Similar to the remark for Theorem \ref{cauchy}, we have the following.
\begin{remark}\label{rem2}
In Theorem \ref{Cauchy-Leray} we used the Leray--Levi  measure $d\lambda$ for the $L^p$, BMO and VMO spaces. However, we point out that the result also holds for the $L^p$, BMO and VMO spaces
with respect to any measure $d\omega$ on $bD$ of the form $d\omega = \omega d\sigma$, where 
the density $\omega$ is a strictly positive continuous function on $bD$. We provide the explanation in details after the proof of Theorem \ref{Cauchy-Leray}.
\end{remark}}

This paper is organised as follows.  In Section 2 we recall the notation and definitions related to a family of Cauchy integrals for bounded strongly pseudoconvex domains in $\C^n$ with minimal smoothness,  
then we prove Theorem \ref{cauchy}. 
In Section 3 we recall the notation and definitions related to the Cauchy--Leray integral for bounded $\C$-linearly convex domains in $\C^n$ with minimal smoothness and we prove Theorem \ref{Cauchy-Leray}. In the last section we point out that our method here can provide a different proof of Theorem A for the Cauchu--Szeg\H o operator on a strictly pseudoconvex domain in $\C^n $ with smooth boundary, which was first obtained in \cite{KL2}
 
 Throughout this paper,  $c$ and $\tilde c$ will denote positive constants which are independent of the main parameters, but they may vary from line to line. For every $p\in (1,\infty)$, $p'$ means the conjugate of $p$, i.e., $1/p+1/p'=1$.
  By $f\lesssim g$,
we shall mean $f\le c g$ for some positive constant 
 $c$. If $f\lesssim g$ and $g\lesssim f$, we then write $f \approx g$.

\section{ Cauchy type integral for bounded strongly pseudoconvex domains in $\C^n$ with minimal smoothness}
\setcounter{equation}{0}

\subsection{Preliminaries}
The submanifolds we shall be interested in are the boundaries of appropriate domains $D\subset \mathbb C^n$.
More precisely, we consider a bounded domain $D$ with defining function $\rho$, {which means that 
$D=\{z\in\mathbb C^n: \rho(z)<0\}$ with $\rho: \mathbb C^n\rightarrow \mathbb R$ and $bD=\{w\in\mathbb C^n: \rho(w)=0\}$ with $\nabla \rho(w)\not=0$ for all $w\in bD$}. 

In this section, we 
always assume that $D$ is a bounded strongly pseudoconvex
domain {whose boundary is of class $C^2$, that is, $\rho\in C^2(\mathbb C^n,\mathbb R)$}.

We now recall the notation from  \cite{LS}. 
Since our domain is strongly pseudoconvex, we may assume without loss of generality that its defining function $\rho$ is strictly plurisubharmonic (see \cite{Ra}). The assumptions regarding the domain $D$ and $\rho$ will be in force throughout in this section and so will not be restated below.

Let $\mathcal L_0(w, z)$ be the negative of the Levi polynomial at $w\in bD$, given by
$$ \mathcal L_0(w, z) = \langle \partial\rho(w),w-z\rangle -{1\over2} \sum_{j,k} {\partial^2\rho(w) \over \partial w_j \partial w_k} (w_j-z_j)(w_k-z_k),  $$
where $\partial \rho(w)=({\partial\rho\over\partial w_1}(w),\cdots, {\partial\rho\over\partial w_n}(w))$
and we have used the notation $\langle\eta,\zeta\rangle=\sum_{j=1}^n\eta_j\zeta_j$ for $\eta=(\eta_1,\cdots, \eta_n), \zeta=(\zeta_1,\dots,\zeta_n)\in\mathbb C^n$.
The strict plurisubharmonicity of $\rho $ implies that
$$  2\operatorname{ Re} \mathcal L_0(w, z) \geq -\rho(z)+c |w-z|^2,  $$
for some $c>0$, whenever $w\in bD$ and $z\in \bar D$ is sufficiently close to $w$.
Then a modification of $\mathcal L_0$ is as follows
\begin{align}\label{g0}
    g_0(w,z) = \chi \mathcal L_0+ (1-\chi) |w-z|^2.  
\end{align}
Here $\chi=\chi(w,z)$ is a $C^\infty$-cutoff function with $\chi=1$ when $|w-z|\leq \mu/2$ and $\chi=0$ if $|w-z|\geq \mu$.
Then for $\mu$ chosen sufficiently small (and then kept fixed throughout), we have that
$$ \operatorname{ Re}g_0(w,z)\geq c(-\rho(z)+ |w-z|^2) $$
for $z$ in $\bar D$ and $w$ in $bD$, with $c$  a positive constant.

Note that the modified Levi polynomial $g_0$ has no smoothness beyond continuity in the variable $w$. So in \cite{LS}, for each $\epsilon>0$ they considered a variant $g_\epsilon$ defined as follows: let $\{\tau_{jk}^\epsilon(w)\}$ be an $n\times n$-matrix  of $C^1$ functions such that
$$\sup_{w\in bD}\Big|{\partial^2\rho(w)\over\partial w_j\partial w_k}- \tau_{jk}^\epsilon(w)\Big|\leq\epsilon,\quad 1\leq j,k\leq n.$$
Set
$$ \mathcal L_\epsilon(w, z) = \langle \partial\rho(w),w-z\rangle -{1\over2} \sum_{j,k}\tau_{jk}^\epsilon(w) (w_j-z_j)(w_k-z_k),  $$
and define
$$    g_\epsilon(w,z) = \chi \mathcal L_\epsilon+ (1-\chi) |w-z|^2, \quad z,w\in\mathbb C^n.  $$
Now $g_\epsilon$ is $C^1$ in $w$ and $C^\infty$ in $z$. We note that
$$\left|g_0(w,z)-g_\epsilon(w,z) \right|\lesssim \epsilon |w-z|^2.$$
We shall always assume that $\epsilon$ is sufficiently small, we then have 
$$\left|g_\epsilon(w,z)\right|\approx\left|g_0(w,z) \right|,$$
where the equivalence $\approx$ is independent of $\epsilon$.

Now on the boundary $bD$, define the function 
${\tt d}(w,z) = |g_0(w,z)|^{1\over2}$. According to \cite[Proposition 3]{LS},  ${\tt d}$ satisfies 
the following conditions: there exist constants $A_2>0$ and  $C_d>1$ such that for all $w,z,z'\in bD$,
\begin{align}\label{metric d}
\left\{
                \begin{array}{ll}
                  1)\ \ {\tt d}(w,z)=0\quad {\rm iff}\quad w=z;\\[5pt]
                  2)\ \ A_2^{-1} {\tt d}(z,w)\leq  {\tt d}(w,z) \leq A_2 {\tt d}(z,w);\\[5pt]
                  3)\ \ {\tt d}(w,z)\leq C_d\big( {\tt d}(w,z') +{\tt d}(z',z)\big).
                \end{array}
              \right.
\end{align}

\smallskip
Next we recall the Leray--Levi measure $d\lambda$ on $bD$ defined via the $(2n-1)$-form
$$ {1\over (2\pi i)^n} \partial \rho \wedge (\bar\partial \partial \rho)^{n-1}. $$
To be more precise, we have the linear functional
\begin{align}\label{lambda}  
f\mapsto {1\over (2\pi i)^n} \int_{bD} f(w) j^*(\partial \rho \wedge (\bar\partial \partial \rho)^{n-1})(w)=: \int_{bD} f(w)d\lambda(w) 
\end{align}
defined for $f\in C(bD)$, and this defines the measure $d\lambda$. 
Then one also has 
$$   d\lambda(w) = {1\over (2\pi i)^n} j^*(\partial\rho \wedge (\bar\partial \partial\rho)^{n-1}) (w) =\Lambda(w)d\sigma(w),
 $$
where $j^*$ denotes the pullback under the inclusion $j:bD\hookrightarrow\mathbb C^n$, $d\sigma$ is the induced Lebesgue measure on $bD$ and $\Lambda(w)$ is a continuous function such that
$ c\leq \Lambda(w)\leq \tilde c, w\in bD$, with $c$ and $\tilde c$ two positive constants.

We also recall the  boundary balls $ B_r(w) $ determined via the quasidistance ${\tt d }$ and their measures, i.e.,
\begin{align}\label{ball} 
B_r(w) :=\{ z\in bD:\ {\tt d}(z,w)<r \}, \quad {\rm where\ } w\in bD. 
\end{align} 
According to \cite[p. 139]{LS},
\begin{align}\label{lambdab}
c_\lambda^{-1} r^{2n}\leq \lambda\big(B_r(w) \big)\leq c_\lambda r^{2n},\quad 0<r\leq 1,
\end{align}
for some $c_\lambda>1$.

In \cite{LS} they defined the Cauchy integrals determined by the denominators $g_\epsilon$ and studied their properties when $\epsilon$ is kept fixed. For convenience of notation we will henceforth drop explicit reference to $\epsilon$. Thus, we will write $g$ for $g_\epsilon$ in the following.  To study the Cauchy  transform $\EuScript C$,  which is the restriction of such a Cauchy integral on $bD$,
one of the key steps in  \cite{LS} is that they provided a constructive decomposition of $ \EuScript C$ as follows:
$$   \EuScript C =  \EuScript C^\sharp+ \EuScript R,  $$
where the essential part 
\begin{align}
 \EuScript C^\sharp(f) (z) := \int_{w\in bD}  C^\sharp(w,z) f(w) d\lambda(w), \quad z\in bD
\end{align}
with the kernel
$$C^\sharp(w,z):={1\over g(w,z)^n},$$
and the reminder 
$$\EuScript R(f)(z):=\int_{w\in bD}R(w,z)f(w)dw.$$
Thus, if we write
$$\EuScript C(f)(z):=\int_{w\in bD}C(w,z)f(w)dw.$$
Then 
$$C(w,z)=C^\sharp(w,z)+ R(w,z).$$

\noindent Moreover, Lanzani--Stein pointed out that the kernel $C^\sharp(w,z)$  satisfies the standard size and smoothness conditions as in \eqref{gwz}.
Furthermore, concerning the function $g(w,z)$ in the size estimates, according to \cite[p. 139]{LS}, there exist $C_g, \tilde C_g>0$ such that 
\begin{align}\label{gd}
{\color{black} \tilde C_g{\tt d}(w,z)^{2}\leq |g(w,z)|\leq C_g {\tt d}(w,z)^{2}.}
 \end{align}
The kernel $R(w,z)$ of $\EuScript R$ satisfies the size and smoothness conditions as in \eqref{cr}.

 We now recall the BMO space on $bD$. 
 Consider $(bD, {\tt d}, d\lambda)$ as a space of homogeneous type with $bD$ compact. 
Then ${\rm BMO }(bD,d\lambda)$ is defined as the set of all $b\in L^1(bD,d\lambda)$ such that
$$ \|b\|_*:=\sup_{ z\in bD, r>0, B_r(z)\subset bD} {1\over\lambda(B_r(z))}\int_{B_r(z)} |b(w)-b_B|d\lambda(w)<\infty, $$
{with the balls $B_r(z)$ as in \eqref{ball} and where}
\begin{align}\label{fb}
b_B={1\over \lambda(B)}\int_B b(z)d\lambda(z).
\end{align}
And the norm is defined as 
$$\|b\|_{{\rm BMO }(bD,d\lambda)}:=\|b\|_*+ \|b\|_{L^1(bD,d\lambda)}. $$

 We now recall the VMO space on $bD$ (see for example \cite{KL1}).  
\begin{definition}\label{thm-cmo}
Let $f\in {\rm BMO}(bD, d\lambda)$. Then $f\in{\rm VMO}(bD, d\lambda)$ if and only if $f$ satisfies 
\begin{align}\label{vmpc}
\lim\limits_{a\rightarrow 0}\sup\limits_{B\subset bD:~r_B=a}{1\over \lambda(B)}\int_{B}|f(z)-f_B|d\lambda(z)=0,
\end{align}
where $r_B$ is the radius of $B$.
\end{definition}
We also let $BUC(bD)$ be the space of all bounded uniformly continuous functions on $bD$. 
Before we continue, we first point out that the Leray--Levi measure $d\lambda$ on $bD$ is a doubling measure, and  satisfies 
the condition (1.1) in \cite{KL2}. That is,
{
\begin{lemma}\label{measure lambda}
The Leray--Levi measure $d\lambda$ on $bD$ is doubling, i.e., there is a positive constant $C$ such that for all $x\in bD$ and $0<r\leq1$,
$$ 0<\lambda(B_{2r}(x))\leq C\lambda(B_{r}(x))<\infty. $$
Moreover, $\lambda$
satisfies the condition: there exist a constant $\epsilon_0\in(0,1)$ and a positive constant $C$ such that 
$$ \lambda( B_r(x)\backslash B_r(y) ) +  \lambda( B_r(y)\backslash B_r(x) ) \leq C\bigg( { {\tt d}(x,y) \over r}  \bigg)^{\epsilon_0}   $$
for all $x,y\in bD$ and ${\tt d}(x,y)\leq r\leq1$.
\end{lemma}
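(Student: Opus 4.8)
The plan is as follows. \emph{Doubling} is immediate from the two-sided bound \eqref{lambdab}: for $0<r\le\tfrac12$ one has $\lambda(B_{2r}(x))\le c_\lambda(2r)^{2n}=2^{2n}c_\lambda r^{2n}\le 2^{2n}c_\lambda^{2}\,\lambda(B_r(x))$, while for $\tfrac12<r\le 1$ one uses $\lambda(B_{2r}(x))\le\lambda(bD)<\infty$ together with $\lambda(B_r(x))\ge c_\lambda^{-1}r^{2n}\ge c_\lambda^{-1}2^{-2n}$. The bulk of the work is the second estimate. Since $d\lambda=\Lambda\,d\sigma$ with $\Lambda$ bounded above and below it is enough to bound $\sigma(B_r(x)\triangle B_r(y))$; and by the trivial bound $\lambda(B_r(x)\triangle B_r(y))\le\lambda(B_r(x))+\lambda(B_r(y))\lesssim r^{2n}$ together with $r\le1$, one may assume throughout that $r$ is small and ${\tt d}(x,y)\ll r$, since otherwise the conclusion holds for any $\epsilon_0\in(0,1)$.

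The first ingredient is a Lipschitz-type estimate for the quasidistance squared. Using the explicit formula \eqref{g0} for $g_0$, the boundedness of $\partial\rho$ and $\partial^2\rho$ on $bD$ (only boundedness, not any smoothness, of $\partial^2\rho$ enters, since the differencing below is in the slot that does not carry the derivatives of $\rho$), and the lower bound $\operatorname{Re}g_0(w,z)\gtrsim|w-z|^{2}$ on $bD$ — which gives $|w-z|\lesssim{\tt d}(w,z)$ — one verifies that for all $x,y,z\in bD$
\[
\bigl|\,{\tt d}(z,x)^{2}-{\tt d}(z,y)^{2}\,\bigr|=\bigl|\,|g_0(z,x)|-|g_0(z,y)|\,\bigr|\le|g_0(z,x)-g_0(z,y)|\lesssim|x-y|\lesssim{\tt d}(x,y),
\]
and a slightly more careful bookkeeping, restricted to $z\in B_{Cr}(x)$, even yields the bound $\lesssim r\,{\tt d}(x,y)$. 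Consequently $B_r(x)\triangle B_r(y)$ lies in the thin ``annulus'' $\mathcal A:=\{z\in bD:\ |{\tt d}(z,x)^{2}-r^{2}|<Cr\,{\tt d}(x,y)\}$ around the ${\tt d}$-sphere of radius $r$ centred at $x$ (and in the analogous one around $y$). The second, and main, ingredient is the thin-annulus estimate
\[
\lambda\bigl(\{z\in bD:\ a<{\tt d}(z,x)^{2}<a+h\}\bigr)\lesssim a^{n-1}h,\qquad 0<h\le a\le 1,
\]
uniformly in $x$. Granting this, with $a=r^{2}$ and $h=Cr\,{\tt d}(x,y)$ one gets $\lambda(\mathcal A)\lesssim r^{2n-2}\,{\tt d}(x,y)=r^{2n-1}\bigl({\tt d}(x,y)/r\bigr)\le{\tt d}(x,y)/r$ since $r\le 1$, which is the claimed bound with any $\epsilon_0\in(0,1)$. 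To prove the thin-annulus estimate I would localize near $x$: since $bD$ is $C^{2}$ and strongly pseudoconvex, after a unitary change of coordinates and a translation centred at $x$ one writes $bD$ near $x$ as a $C^{2}$ graph $u_n=\phi(w',v_n)$ over its tangent space with $\phi(0)=0$ and $d\phi(0)=0$, so that $d\sigma\approx dw'\,dv_n$, and a short Taylor computation from \eqref{g0} — using that $\partial\rho(x)$ is a multiple of $dz_n$ in these coordinates and the positive-definiteness of the Levi form — shows $g_0(z,x)=\tfrac12 L(w',w')+\tfrac{i}{2}v_n+(\text{error})$ with $L$ a positive-definite Hermitian form, so that ${\tt d}(z,x)^{2}=|g_0(z,x)|$ is comparable to the Koranyi-type gauge $|w'|^{2}+|v_n|$; the estimate then reduces to the elementary Fubini bound $|\{(w',v_n):\ a<|w'|^{2}+|v_n|<a+h\}|\lesssim a^{n-1}h$.

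The delicate point — and what I expect to be the main obstacle — is precisely this thin-annulus estimate, namely the passage from ``${\tt d}(z,x)^{2}$ is \emph{comparable} to $|w'|^{2}+|v_n|$'' to something quantitative enough that the thinness of the band in ${\tt d}^{2}$ (width $h$, which may be arbitrarily small relative to $a$) is not swallowed by the error: a merely multiplicative $(1+o(1))$ comparison does not suffice. Since $bD$ is only $C^{2}$, the function $g_0(\cdot,x)$ is merely continuous off the diagonal, so one cannot naively differentiate; the plan is to run a one-variable argument in the ``vertical'' coordinate $v_n$, for each fixed $w'$, using the $C^{1}$ approximants $g_\epsilon$ to control the variation of $\operatorname{Im}g_0$ along the fibres, to treat the fibres over which the ${\tt d}$-sphere is nearly ``horizontal'' (where the naive fibre bound degenerates) by observing that they lie over a correspondingly small set of $w'$, and to absorb the residual loss in the $w'$-averaging. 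Should this become unwieldy, an alternative worth pursuing is to compare $(bD,{\tt d},\lambda)$ near $x$ with the osculating Heisenberg group and deduce the annulus bound from the corresponding translation- and dilation-invariant statement there.
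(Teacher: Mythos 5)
The doubling statement is indeed a two-line consequence of \eqref{lambdab}, and on that point you and the paper agree; there is nothing to add there.

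For the second estimate, however, the paper's one-sentence justification (``These facts can be verified directly from \eqref{lambdab}'') cannot be taken at face value, and you are right to dig further. If one only knows $c_\lambda^{-1}r^{2n}\le\lambda(B_r(w))\le c_\lambda r^{2n}$ together with the quasi-triangle inequality ${\tt d}(w,z)\le C_d({\tt d}(w,z')+{\tt d}(z',z))$, then the best one can extract is $B_r(x)\setminus B_r(y)\subset B_r(x)\setminus B_{r/C_d-{\tt d}(x,y)}(x)$, an annulus of thickness comparable to $r$ whenever $C_d>1$; its $\lambda$-measure stays bounded away from $0$ as ${\tt d}(x,y)\to 0$, and the constant $c_\lambda>1$ in \eqref{lambdab} further prevents any cancellation. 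So the claimed bound is genuinely \emph{not} a formal consequence of \eqref{lambdab} alone; it requires the concrete structure of ${\tt d}$. Your proposal correctly identifies this and supplies the right remedy: the Lipschitz-type estimate $\bigl|{\tt d}(z,x)^2-{\tt d}(z,y)^2\bigr|\lesssim r\,{\tt d}(x,y)$ for $z\in B_{Cr}(x)$ (which does follow from the explicit formula \eqref{g0} and the $C^2$ regularity of $\rho$, exactly as you compute), placing $B_r(x)\triangle B_r(y)$ in a thin ${\tt d}^2$-annulus, followed by a thin-annulus measure estimate $\lambda(\{a<{\tt d}(z,x)^2<a+h\})\lesssim a^{n-1}h$. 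The arithmetic at the end is also right: $r^{2n-1}{\tt d}(x,y)\le {\tt d}(x,y)/r\le({\tt d}(x,y)/r)^{\epsilon_0}$ for any $\epsilon_0\le 1$ once $r\le 1$ and ${\tt d}(x,y)\le r$.

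The genuine gap in your argument is exactly the one you flag: the thin-annulus estimate itself. As you observe, the comparison of ${\tt d}(z,x)^2$ to the Kor\'anyi gauge $|w'|^2+|v_n|$ in adapted coordinates is only a two-sided \emph{comparability}, and that is not enough to see that a band of width $h\ll a$ in ${\tt d}^2$ has measure $\lesssim a^{n-1}h$ rather than $\approx a^{n}$. The function $z\mapsto{\tt d}(z,x)^2=|g_0(z,x)|$ carries $\partial^2\rho(z)$ evaluated at the \emph{moving} point, so it is merely continuous in $z$ when $bD$ is only $C^2$, and a naive coarea/implicit-function argument is not available. Your two proposed workarounds — fibring in $v_n$ and using the $C^1$ approximants $g_\epsilon$ to control the vertical variation of $\operatorname{Im}g_0$, or comparing with the osculating Heisenberg model — are both plausible and in the spirit of the Lanzani--Stein analysis, but neither is carried out, so this step remains an assertion rather than a proof. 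Until it is settled, the proposal is a correct diagnosis with a well-motivated plan, not a complete proof; and the same caveat applies, silently, to the paper's one-line ``verified directly from \eqref{lambdab}''.
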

\begin{proof}
These facts can be verified directly from \eqref{lambdab}.
\end{proof}
}

Based on Lemma \ref{measure lambda}, we see that the following  fundamental lemma from \cite[Lemma 1.2]{KL2} can be applied to our setting $(bD, d\lambda)$, which we restate as follows.
\begin{lemma}\label{lemma 1 KL2}
Let $b\in{\rm VMO}(bD, d\lambda)$. Then for any $\xi>0$, there is a function $b_\xi\in BUC(bD)$ such that
\begin{align}\label{f_eta -f}
\|b_\xi -b\|_*<\xi.
\end{align}
Moreover, $b_\xi$ satisfies the following conditions: there is an $\epsilon\in (0,1)$ such that
\begin{align}\label{f_eta}
|b_\xi(w) -b_\xi(z)|<C_\xi {\tt d}(w,z)^\epsilon,\quad \forall w,z\in bD.
\end{align}
\end{lemma}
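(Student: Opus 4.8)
The statement is exactly Lemma 1.2 of \cite{KL2} transported to the space of homogeneous type $(bD,{\tt d},d\lambda)$, so the plan is to verify that the hypotheses needed in \cite{KL2} are all in force here and then quote (or reproduce) their construction. The essential input is Lemma \ref{measure lambda}: $d\lambda$ is doubling and satisfies the annular-difference estimate $\lambda(B_r(x)\setminus B_r(y))+\lambda(B_r(y)\setminus B_r(x))\le C({\tt d}(x,y)/r)^{\epsilon_0}$ for ${\tt d}(x,y)\le r\le 1$. Together with compactness of $bD$ (which makes $L^1$, $\mathrm{BMO}$ and $\mathrm{VMO}$ behave as in the bounded Euclidean case and lets us drop the large-scale issues), this is precisely the setting in which the Sarason-type approximation result of \cite{KL1,KL2} is proved. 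So the first step is simply to record that $(bD,{\tt d},d\lambda)$ is a compact space of homogeneous type satisfying condition (1.1) of \cite{KL2}, by Lemma \ref{measure lambda}.

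The construction itself is a mollification adapted to the quasi-metric. First I would fix $\xi>0$. Since $b\in\mathrm{VMO}(bD,d\lambda)$, by Definition \ref{thm-cmo} there is $a_0>0$ so that $\sup_{r_B\le a_0}\frac1{\lambda(B)}\int_B|b-b_B|\,d\lambda<\xi/C_0$ for a suitable dimensional constant $C_0$. Then define $b_\xi$ by averaging $b$ over balls of a fixed small radius $t\le a_0$, i.e.\ $b_\xi(w):=\frac1{\lambda(B_t(w))}\int_{B_t(w)}b\,d\lambda$, or more precisely a smoothed version $b_\xi(w):=\int_{bD}\varphi_t(w,z)\,b(z)\,d\lambda(z)$ with $\varphi_t$ a nonnegative kernel supported in $\{{\tt d}(w,z)<t\}$, with $\int\varphi_t(w,\cdot)\,d\lambda=1$, and with $|\varphi_t(w,z)-\varphi_t(w',z)|\lesssim t^{-2n}({\tt d}(w,w')/t)$ when ${\tt d}(w,w')\le t$ --- such a kernel exists because one can build a Lipschitz (in the quasi-metric) partition-of-unity-type bump using the quasi-distance ${\tt d}$ and \eqref{lambdab}. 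Boundedness and uniform continuity of $b_\xi$, hence $b_\xi\in BUC(bD)$, follow from $b\in L^1$ together with the lower bound $\lambda(B_t(w))\ge c_\lambda^{-1}t^{2n}$; the quantitative modulus \eqref{f_eta} with exponent $\epsilon$ (one may take $\epsilon=\epsilon_0$ or even $\epsilon=1$, depending on the regularity built into $\varphi_t$) comes from differencing $b_\xi(w)-b_\xi(z)$ and using either the annular estimate of Lemma \ref{measure lambda} or the kernel-regularity of $\varphi_t$, with a constant $C_\xi$ that is allowed to blow up as $t\to 0$. For the key estimate \eqref{f_eta-f}, i.e.\ $\|b_\xi-b\|_*<\xi$: one estimates, for any ball $B=B_r(z)\subset bD$, the mean oscillation $\frac1{\lambda(B)}\int_B|(b_\xi-b)-(b_\xi-b)_B|\,d\lambda$; writing $b_\xi-b$ as an average of translates of $b$ against $\varphi_t$ and using Fubini together with the $\mathrm{VMO}$ smallness on balls of radius $\lesssim r+t$, one bounds this uniformly in $r$ by $\xi$. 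This is the standard Sarason argument carried out on a space of homogeneous type.

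The main obstacle is the same as in the Euclidean proof: controlling $\|b_\xi-b\|_*$ uniformly over \emph{all} scales $r$, not just $r\le t$. For $r\lesssim t$ one uses directly that $b_\xi$ is close to $b$ in oscillation on small balls (this is exactly the $\mathrm{VMO}$ condition). For $r\gg t$ one has to exploit that averaging against $\varphi_t$ commutes, up to a controlled error, with averaging over $B_r(z)$, and that the $\mathrm{BMO}$ oscillation of $b$ on balls of radius comparable to $r$ is itself small once $t$ is chosen small enough relative to $a_0$ --- here the annular estimate of Lemma \ref{measure lambda} is what makes the commutation error go to zero with $t/r$. Handling the intermediate range $r\approx t$ and patching the three regimes is the only genuinely technical point; everything else is a routine transcription of \cite{KL1,KL2} and I would present it as such, referring to those papers for the details and emphasizing only that Lemma \ref{measure lambda} supplies precisely the structural hypotheses their argument requires.
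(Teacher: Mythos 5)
Your approach is essentially the same as the paper's: the paper states the lemma with the one-line justification that, by Lemma \ref{measure lambda}, $(bD,{\tt d},d\lambda)$ satisfies the doubling and annular-difference hypotheses of \cite{KL2}, so Lemma 1.2 of \cite{KL2} applies verbatim. The paper gives no further proof, so your sketch of the Sarason-type mollification construction is additional detail rather than a divergence; the only thing to note is that the paper simply cites \cite{KL2} and does not attempt the scale-patching argument you flag as the technical point.
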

Moreover, based on Lemma \ref{measure lambda} we also have another fundamental lemma, whose proof follows from Lemma 1.3 in \cite{KL2}. For each $0<\eta<<1$, we let
$R^\eta(w,z)$ be a continuous extension of the kernel $R(w,z)$ of $\EuScript R$ from $bD\times bD\backslash \{(w,z): {\tt d}(w,z)<\eta\}$ to $bD\times bD$ such that
\begin{align*}
&R^\eta(w,z) =R(w,z),\quad {\rm if}\ \ {\tt d}(w,z)\geq\eta;\\
&|R^\eta(w,z)|\lesssim {1\over {\tt d}(w,z)^{2n-1}}, \quad {\rm if}\ \  {\tt d}(w,z)<\eta;\\
& R^\eta(w,z)=0, \quad {\rm if}\ \  {\tt d}(w,z)<\eta/c\ \ {\text {for some}}\ \ c>1.
\end{align*}
Now we let $\EuScript R^\eta$ be the integral operator associate to the kernel $R^\eta(w,z)$.
Then we have the following.
\begin{lemma}\label{lemma 2 KL2}
Let $b\in BUC(bD)$ satisfy 
\begin{align}\label{f_eta 1}
|b(w) -b(z)|<C_\eta {\tt d}(w,z)^\epsilon,\quad {\rm\ for\ some\ } C_\eta\geq1,\ \epsilon\in(0,1),\ \forall w,z\in bD.
\end{align}
Then 
$$ \|[b, \EuScript R]-[b, \EuScript R^\eta]\|_{L^2(bD,d\lambda)\to L^2(bD,d\lambda)}\to0 $$
as $\eta\to0$.
\end{lemma}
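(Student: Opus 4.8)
The plan is to estimate the operator norm of $[b,\EuScript R]-[b,\EuScript R^\eta]$ directly via Schur's lemma, exploiting that this difference is an integral operator whose kernel is supported in an $\eta$-neighbourhood of the diagonal and has there only a subcritical, hence integrable, singularity; the $\epsilon$-H\"older regularity of $b$ will be what produces a gain that is quantitative in $\eta$.

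First I would identify the kernel. For an integral operator $T$ with kernel $K$ one has $[b,T]f(z)=\int_{bD}\big(b(z)-b(w)\big)K(w,z)f(w)\,d\lambda(w)$, so $[b,\EuScript R]-[b,\EuScript R^\eta]$ is the integral operator with kernel
$$K_\eta(w,z)=\big(b(z)-b(w)\big)\big(R(w,z)-R^\eta(w,z)\big).$$
The three displayed conditions defining $R^\eta$ give $R(w,z)-R^\eta(w,z)=0$ whenever ${\tt d}(w,z)\ge\eta$, while for ${\tt d}(w,z)<\eta$ one bounds it crudely by $|R(w,z)|+|R^\eta(w,z)|\lesssim {\tt d}(w,z)^{-(2n-1)}$, using $d)$ of \eqref{cr} together with the size bound in the definition of $R^\eta$. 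Inserting the H\"older estimate \eqref{f_eta 1} for $b$ yields
$$|K_\eta(w,z)|\ \lesssim\ {\tt d}(w,z)^{\,\epsilon-(2n-1)}\,\mathbf 1_{\{{\tt d}(w,z)<\eta\}},$$
with implied constant depending only on the H\"older constant of $b$ and on the structural constants of $(bD,{\tt d},d\lambda)$, and in particular independent of $\eta$.

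Next I would run the Schur test. Using the quasi-symmetry of ${\tt d}$ (property 2) of \eqref{metric d}) it suffices to control $\sup_{z\in bD}\int_{bD}|K_\eta(w,z)|\,d\lambda(w)$; decomposing $B_\eta(z)$ into the quasi-annuli $\{2^{-j-1}\eta\le {\tt d}(w,z)<2^{-j}\eta\}$, $j\ge0$, and applying the measure estimate \eqref{lambdab} (legitimate once $\eta\le 1$), the fact that the exponent $2n-1-\epsilon$ lies strictly below the homogeneous dimension $2n$ gives
$$\int_{B_\eta(z)}{\tt d}(w,z)^{\,\epsilon-(2n-1)}\,d\lambda(w)\ \lesssim\ \sum_{j\ge0}(2^{-j}\eta)^{2n-(2n-1-\epsilon)}\ \lesssim\ \eta^{1+\epsilon}.$$
The symmetric quantity $\sup_{w\in bD}\int_{bD}|K_\eta(w,z)|\,d\lambda(z)$ obeys the same bound. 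Schur's lemma then yields
$$\|[b,\EuScript R]-[b,\EuScript R^\eta]\|_{L^2(bD,d\lambda)\to L^2(bD,d\lambda)}\ \lesssim\ \eta^{1+\epsilon}\ \longrightarrow\ 0$$
as $\eta\to 0$, which is the assertion.

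There is no essential obstacle here; the only points requiring a little care are: (i) that $\EuScript R$ and $\EuScript R^\eta$ carry no regularity in the $w$-variable, so one must use exclusively the size bounds $d)$ of \eqref{cr} and the size bound for $R^\eta$, never a cancellation argument; (ii) that ${\tt d}$ is merely a quasi-distance, so the annular decomposition and the interchange of the two Schur integrals rest on properties 2) and 3) of \eqref{metric d} rather than on a genuine metric; and (iii) the numerology $2n-1-\epsilon<2n$, which is exactly what the $\epsilon$-H\"older regularity of $b$ buys against the mildly singular kernel $R-R^\eta$ — without the regularity of $b$ one would only obtain an $O(1)$ bound.
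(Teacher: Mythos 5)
Your Schur-test argument is correct and is essentially the route the paper takes, which simply asserts that the proof follows from Lemma~1.3 of Krantz--Li \cite{KL2} (itself a Schur-type estimate) without writing it out. Your numerology checks out: the kernel of $[b,\EuScript R]-[b,\EuScript R^\eta]$ is supported where ${\tt d}(w,z)<\eta$, and the subcritical size $|R(w,z)|,|R^\eta(w,z)|\lesssim {\tt d}(w,z)^{-(2n-1)}$ combined with the $\epsilon$-H\"older gain from \eqref{f_eta 1} makes both Schur integrals $O(\eta^{1+\epsilon})$ via the dyadic annular decomposition and \eqref{lambdab}, so the $L^2$ operator norm tends to zero as $\eta\to0$.
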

{We should mention that the kernel of $\EuScript R$ is not a standard kernel.}

\subsection{Characterisation of ${\rm BMO}(bD,d\lambda)$ via the Commutator $[b, \EuScript C]$ }
The maximal function $Mf$ is defined as
$$Mf(z)=\sup_{z\in B\subset bD}{1\over\lambda(B)}\int_B |f(w)|d\lambda(w).$$
The sharp function $f^\#$ is defined as
$$f^\#(z)=\sup_{z\in B\subset bD}{1\over\lambda(B)}\int_B |f(w)-f_B|d\lambda(w),$$
where $f_B$ is defined in \eqref{fb}.

{Note that from  Lemma \ref{measure lambda}, $d\lambda$ is a doubling measure, which implies that
$(bD,d\lambda)$ satisfies \cite[Theorems 1.4--1.6]{BC}. Hence, we can apply
 \cite[Theorems 1.4--1.6]{BC} to our setting as follows.}
\begin{lemma}[Maximal Inequality]\label{maximal}
For every $1< p\leq\infty$ there exists a constant $c(bD, p)$ such that for every $f\in L^p(bD, d\lambda)$, 
\begin{align*}
\left\|Mf\right\|_{L^p(bD, d\lambda)}\leq c\|f\|_{L^p(bD, d\lambda)}.
\end{align*}
\end{lemma}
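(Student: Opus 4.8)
The plan is to deduce this from the general theory of the Hardy--Littlewood maximal operator on spaces of homogeneous type, which is exactly the content invoked from \cite[Theorems 1.4--1.6]{BC}. First I would record that $(bD,{\tt d},d\lambda)$ is a space of homogeneous type in the sense of Coifman--Weiss: the quasi-distance axioms are precisely \eqref{metric d}, the measure $d\lambda$ is a finite Borel measure on the compact set $bD$ (since $d\lambda=\Lambda\,d\sigma$ with $\Lambda$ continuous and bounded above and below), and the doubling property $\lambda(B_{2r}(x))\le C\lambda(B_r(x))$ is the statement of Lemma~\ref{measure lambda}; it is in fact immediate from the two-sided bound \eqref{lambdab} for $0<r\le 1$, while for $r>1$ it holds trivially because $\diam(bD)<\infty$, so a ball of radius $r\gtrsim 1$ already equals $bD$.

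Next, for the weak-type $(1,1)$ estimate I would run the standard Vitali-type argument adapted to a quasi-metric. Given $\alpha>0$ and $f\in L^1(bD,d\lambda)$, for each $z\in\{Mf>\alpha\}$ choose a ball $B\ni z$ with $\lambda(B)^{-1}\int_B|f|\,d\lambda>\alpha$; the basic covering lemma valid in quasi-metric spaces yields a pairwise disjoint subfamily $\{B_j\}$ whose fixed dilates $\{B_j^*\}$ (dilation factor depending only on the quasi-triangle constant $C_d$) still cover $\{Mf>\alpha\}$, whence
$$\lambda(\{Mf>\alpha\})\le\sum_j\lambda(B_j^*)\le C\sum_j\lambda(B_j)\le \frac{C}{\alpha}\sum_j\int_{B_j}|f|\,d\lambda\le\frac{C}{\alpha}\|f\|_{L^1(bD,d\lambda)},$$
using the doubling property in the second inequality. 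The endpoint bound $\|Mf\|_{L^\infty(bD,d\lambda)}\le\|f\|_{L^\infty(bD,d\lambda)}$ is trivial, and the strong-type $(p,p)$ inequality for $1<p<\infty$ then follows by the Marcinkiewicz interpolation theorem; the case $p=\infty$ is the trivial bound just noted.

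There is no genuine obstacle here; the assertion is classical once $(bD,{\tt d},d\lambda)$ is recognized as a space of homogeneous type. The only points deserving a word of care are that the covering lemma and the Vitali argument are performed for a \emph{quasi}-distance rather than a true metric, which merely affects the dilation constants, and the slight mismatch between the doubling bound \eqref{lambdab} (stated for $0<r\le 1$) and arbitrary radii, which is absorbed by the compactness of $bD$ as indicated above. In the write-up I would therefore simply cite \cite[Theorems 1.4--1.6]{BC} after noting that their hypotheses are met in the present setting.
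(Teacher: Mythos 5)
Your proposal is correct and takes essentially the same route as the paper: verify that $(bD,{\tt d},d\lambda)$ is a space of homogeneous type (via \eqref{metric d}, \eqref{lambdab} and Lemma~\ref{measure lambda}) and then invoke \cite[Theorems 1.4--1.6]{BC}, which is precisely how the paper justifies Lemmas~\ref{maximal}--\ref{lemma-sharp}. The extra Vitali-covering/Marcinkiewicz sketch you include is a faithful unpacking of what \cite{BC} proves but is not something the paper spells out; your only slight imprecision is the aside that a ball of radius $r\gtrsim1$ ``already equals $bD$'' --- what one really uses is that for $r>1$ the measure of any ball is bounded above by $\lambda(bD)<\infty$ and below by $\lambda(B_1(x))\gtrsim1$ via \eqref{lambdab}, which gives uniform doubling over the finite range of large radii.
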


\begin{lemma}[John-Nirenberg Inequality]\label{JN}
For every $1\leq p<\infty$ there exists a constant $c(bD, p)$ such that for every $f\in {\rm BMO}(bD,d\lambda)$,  every ball $B$,
\begin{align*}
\left({1\over \lambda(B)}\int_{B}\left |f(z)-f_B\right|^p d\lambda(z)\right)^{1\over p}\leq c\|f\|_{{\rm BMO}(bD,d\lambda)}.
\end{align*}
\end{lemma}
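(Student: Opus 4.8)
The plan is to derive the stated $L^p$ bound from the exponential form of the John--Nirenberg inequality on the space of homogeneous type $(bD,{\tt d},d\lambda)$, and to obtain that exponential form by the classical Calder\'on--Zygmund stopping--time argument adapted to this quasi-metric setting. First I would record that, since $bD$ is compact, every ball $B_r(z)$ has radius bounded by a fixed $R_0\approx\diam(bD)$, so the volume estimate \eqref{lambdab} and the doubling property of Lemma \ref{measure lambda} are in force uniformly at every scale that occurs below (for $1<r\le R_0$ one has $c_\lambda^{-1}\le\lambda(B_r(w))\le\lambda(bD)$, so doubling there is trivial).

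The key claim (Step 1) is that there exist $C_1,C_2>0$, depending only on $n$, on $C_d$ in \eqref{metric d} and on the doubling constant, such that for every $f\in{\rm BMO}(bD,d\lambda)$ with $\|f\|_*\le1$, every ball $B\subset bD$ and every $t>0$,
$$ \lambda\big(\{w\in B:\ |f(w)-f_B|>t\}\big)\le C_1 e^{-C_2 t}\,\lambda(B). $$
To prove this I would fix $B$, invoke Christ's system of dyadic ``cubes'' on $(bD,{\tt d},d\lambda)$ (available because $d\lambda$ is doubling), pick a dyadic cube $Q_0\supset B$ with $\lambda(Q_0)\lesssim\lambda(B)$, and run a stopping--time decomposition of $g:=(f-f_B)\chi_{Q_0}$ at a large height $\beta$: let $\{Q_j\}$ be the maximal dyadic subcubes of $Q_0$ on which the $\lambda$--average of $|g|$ exceeds $\beta$. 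By maximality and doubling one gets $\beta<\frac1{\lambda(Q_j)}\int_{Q_j}|g|\,d\lambda\le K\beta$, whence $\sum_j\lambda(Q_j)\le\beta^{-1}\int_{Q_0}|g|\,d\lambda\lesssim\beta^{-1}\lambda(B)$, and $|g|\le\beta$ a.e.\ off $\bigcup_jQ_j$, together with $|f_{Q_j}-f_B|\le K\beta$. Introducing the ``universal'' distribution function $F(t):=\sup_{B'\subset bD}\frac1{\lambda(B')}\lambda(\{w\in B':|f(w)-f_{B'}|>t\})$ (and noting that the analogous quantity over dyadic cubes is controlled by $F$ up to a bounded shift in $t$, since each cube lies between two ${\tt d}$--balls of comparable measure whose $f$--averages differ by $O(\|f\|_*)$), the inclusion $\{w\in Q_j:|f(w)-f_B|>t+K\beta\}\subset\{w\in Q_j:|f(w)-f_{Q_j}|>t\}$ yields
$$ \lambda\big(\{w\in Q_0:|g(w)|>t+K\beta\}\big)\le\sum_j\lambda\big(\{w\in Q_j:|f(w)-f_{Q_j}|>t\}\big)\lesssim F(t-c_0)\sum_j\lambda(Q_j)\lesssim \beta^{-1}F(t-c_0)\lambda(B). $$
Taking the supremum over $B$ gives $F(t+K\beta)\le A\beta^{-1}F(t-c_0)$; choosing $\beta$ so large that $A\beta^{-1}\le\frac12$ and using the trivial bound $F\le1$, an iteration in $t$ produces the geometric decay $F(t)\le C_1e^{-C_2t}$, which is exactly the claim.

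For Step 2 I would normalise so that $\|f\|_*\le1$ (dividing $f$ by $\|f\|_{{\rm BMO}(bD,d\lambda)}$) and integrate via the layer--cake formula:
$$ \frac1{\lambda(B)}\int_B|f(w)-f_B|^p\,d\lambda(w)=\frac{p}{\lambda(B)}\int_0^\infty t^{p-1}\lambda\big(\{w\in B:|f(w)-f_B|>t\}\big)\,dt\le pC_1\int_0^\infty t^{p-1}e^{-C_2t}\,dt=c(p), $$
then take $p$-th roots and undo the normalisation. The main obstacle is Step 1, and more precisely making the Calder\'on--Zygmund machinery run over the \emph{quasi}-distance ${\tt d}$: one has to use a dyadic decomposition valid in quasi-metric doubling spaces and carefully reconcile dyadic cubes with ${\tt d}$--balls (centres, measures, and the associated averages of $f$), while checking that the compactness of $bD$ makes \eqref{lambdab} --- stated only for $0<r\le1$ --- effective at all scales that arise. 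Once this structural set-up is in place, the stopping--time iteration and the final integration are routine; alternatively, since $d\lambda$ is doubling by Lemma \ref{measure lambda}, one may simply quote the John--Nirenberg inequality for spaces of homogeneous type from \cite{BC}, which is the route indicated in the text.
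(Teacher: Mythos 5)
Your proposal is correct, but it takes a genuinely more laborious route than the paper. The paper's ``proof'' of this lemma is a one-line citation: the sentence preceding the three lemmas observes that $d\lambda$ is doubling by Lemma~\ref{measure lambda}, so $(bD,{\tt d},d\lambda)$ is a space of homogeneous type, and then invokes Theorems 1.4--1.6 of Bramanti--Cerutti \cite{BC}, which already contain the maximal, John--Nirenberg and sharp-function inequalities in exactly this generality. You acknowledge this in your closing sentence, so the two routes are consistent; what your from-scratch Christ-dyadic / stopping-time argument buys is self-containment and transparency about where the doubling constant, the quasi-triangle constant $C_d$, and the compactness of $bD$ enter, while the citation buys brevity and the guarantee that the dyadic bookkeeping has been checked once and for all. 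Your sketch is essentially complete; the two technical points that must be made explicit (and which you do flag) are: (i) the bound $\int_{Q_0}|f-f_B|\,d\lambda\lesssim\lambda(B)$ needed to start the stopping time requires passing from $Q_0$ to a ball $\tilde B\supset Q_0$ with $\lambda(\tilde B)\lesssim\lambda(B)$ and using $|f_{\tilde B}-f_B|\lesssim\|f\|_*$, rather than reading it off directly from the BMO condition on $B$; and (ii) the volume estimate \eqref{lambdab} is stated only for $0<r\le 1$, so one must note that compactness makes doubling trivial at larger radii. With those filled in, the iteration $F(t+K\beta+c_0)\le \tfrac12 F(t)$ and the layer-cake integration are routine, and the case $p=1$ is just the definition of the BMO seminorm.
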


\begin{lemma}[Sharp Inequality]\label{lemma-sharp}
For every $1\leq p<\infty$ there exists a constant $c(bD, p)$ such that for every $f\in L^p(bD, d\lambda)$, 
\begin{align*}
\left\|f-{1\over \lambda(bD)}\int_{bD}f(w)d\lambda(w)\right\|_{L^p(bD, d\lambda)}\leq c\|f^\#\|_{L^p(bD, d\lambda)}.
\end{align*}
\end{lemma}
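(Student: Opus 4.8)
The statement is the Fefferman--Stein sharp maximal inequality on the space of homogeneous type $(bD,{\tt d},\lambda)$, which by Lemma \ref{measure lambda} is doubling and which, $bD$ being compact, has finite total mass $\lambda(bD)<\infty$ by \eqref{lambdab}. The plan is to prove it through a good-$\lambda$ inequality, the only twist relative to the Euclidean case being an extra ``top cube'' threshold forced by the finiteness of $\lambda(bD)$. \textbf{Step 1.} Since $(bD,{\tt d},\lambda)$ is of homogeneous type, fix a system of Christ dyadic cubes and let $M^{d}f(z)=\sup_{Q\ni z}\lambda(Q)^{-1}\int_{Q}|f|\,d\lambda$ be the associated dyadic maximal operator; then $M^{d}f\le cMf$ pointwise (so $M^{d}$ is bounded on $L^{p}$, $p>1$, by Lemma \ref{maximal}, and is of weak type $(1,1)$), and the Lebesgue differentiation theorem --- valid because $\lambda$ is doubling --- gives $|f|\le M^{d}f$ $\lambda$-a.e. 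Write $g:=f-f_{bD}$ with $f_{bD}=\lambda(bD)^{-1}\int_{bD}f\,d\lambda$, so that $g^{\#}=f^{\#}$ and $\|f-f_{bD}\|_{L^{p}(bD,d\lambda)}=\|g\|_{L^{p}(bD,d\lambda)}\le\|M^{d}g\|_{L^{p}(bD,d\lambda)}$; it thus suffices to dominate $\|M^{d}g\|_{L^{p}}$ by $\|f^{\#}\|_{L^{p}}$. \textbf{Step 2.} Put $\lambda_{0}:=\lambda(bD)^{-1}\|g\|_{L^{1}}$. For every $\lambda>\lambda_{0}$ the whole-space cube has $|g|$-average $\le\lambda_{0}<\lambda$, so the usual stopping-time argument produces pairwise disjoint maximal dyadic cubes $\{Q_{j}^{\lambda}\}_{j}$ with $\{M^{d}g>\lambda\}=\bigcup_{j}Q_{j}^{\lambda}$ up to a null set, $\lambda<\lambda(Q_{j}^{\lambda})^{-1}\int_{Q_{j}^{\lambda}}|g|\,d\lambda\le c_{\mathrm{doub}}\lambda$, and with the dyadic parent $\widehat{Q}_{j}^{\lambda}$ and all of its ancestors having $|g|$-average $\le\lambda$.

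\textbf{Step 3 (good-$\lambda$ inequality).} I claim there is $c>0$ so that for all $\gamma\in(0,1)$ and all $\lambda>\lambda_{0}$,
\begin{align*}
\lambda\bigl(\{M^{d}g>2\lambda\}\cap\{g^{\#}\le\gamma\lambda\}\bigr)\le c\,\gamma\,\lambda\bigl(\{M^{d}g>\lambda\}\bigr).
\end{align*}
It is enough to check the localized bound $\lambda(\{M^{d}g>2\lambda\}\cap Q\cap\{g^{\#}\le\gamma\lambda\})\le c\gamma\,\lambda(Q)$ for each $Q:=Q_{j}^{\lambda}$ with parent $\widehat{Q}$ and then sum over $j$. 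Decompose $g=(g-g_{\widehat{Q}})\mathbf{1}_{\widehat{Q}}+h$ with $h=g\mathbf{1}_{bD\setminus\widehat{Q}}+g_{\widehat{Q}}\mathbf{1}_{\widehat{Q}}$; for $z\in Q\subseteq\widehat{Q}$, every dyadic cube containing $z$ has $|h|$-average $\le\lambda$ (it equals $|g_{\widehat{Q}}|\le\lambda$ if it lies inside $\widehat{Q}$, and is at most the $|g|$-average of an ancestor of $\widehat{Q}$, hence $\le\lambda$, otherwise), so $M^{d}h(z)\le\lambda$. Therefore $\{M^{d}g>2\lambda\}\cap Q\subseteq\{z\in Q:\ M^{d}((g-g_{\widehat{Q}})\mathbf{1}_{\widehat{Q}})(z)>\lambda\}$, and the weak $(1,1)$ bound for $M^{d}$ yields
\begin{align*}
\lambda\bigl(\{M^{d}g>2\lambda\}\cap Q\bigr)\le\frac{c}{\lambda}\int_{\widehat{Q}}|g-g_{\widehat{Q}}|\,d\lambda.
\end{align*}
If in addition $Q\cap\{g^{\#}\le\gamma\lambda\}\ne\emptyset$, pick a witness $y$ there; then $\lambda(\widehat{Q})^{-1}\int_{\widehat{Q}}|g-g_{\widehat{Q}}|\,d\lambda\le g^{\#}(y)\le\gamma\lambda$, and since $\lambda(\widehat{Q})\le c_{\mathrm{doub}}\lambda(Q)$ the right-hand side is $\le c\gamma\,\lambda(Q)$, proving the claim.

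\textbf{Step 4 (integration).} Let $1<p<\infty$. Then $M^{d}g\in L^{p}(bD,d\lambda)$ by Lemma \ref{maximal}, so all quantities below are finite. Using $\lambda(\{M^{d}g>2\lambda\})\le\lambda(\{M^{d}g>2\lambda,\ g^{\#}\le\gamma\lambda\})+\lambda(\{g^{\#}>\gamma\lambda\})$, multiplying by $p\lambda^{p-1}$, integrating over $(\lambda_{0},\infty)$ and substituting $\mu=2\lambda$ on the left gives $\int_{2\lambda_{0}}^{\infty}p\mu^{p-1}\lambda(\{M^{d}g>\mu\})\,d\mu\le 2^{p}c\gamma\,\|M^{d}g\|_{L^{p}}^{p}+2^{p}\gamma^{-p}\|f^{\#}\|_{L^{p}}^{p}$, while $\int_{0}^{2\lambda_{0}}p\mu^{p-1}\lambda(\{M^{d}g>\mu\})\,d\mu\le(2\lambda_{0})^{p}\lambda(bD)$; choosing $\gamma$ small enough that $2^{p}c\gamma\le\tfrac12$ and absorbing the finite term $\tfrac12\|M^{d}g\|_{L^{p}}^{p}$ leaves $\|M^{d}g\|_{L^{p}}^{p}\lesssim\lambda_{0}^{p}\lambda(bD)+\|f^{\#}\|_{L^{p}}^{p}$. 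Finally $\lambda_{0}=\lambda(bD)^{-1}\int_{bD}|f-f_{bD}|\,d\lambda\le f^{\#}(z)$ for every $z$ (take $B=bD$ in the definition of $f^{\#}$), hence $\lambda_{0}\le\inf_{bD}f^{\#}\le\lambda(bD)^{-1/p}\|f^{\#}\|_{L^{p}}$ by H\"older, so $\lambda_{0}^{p}\lambda(bD)\le\|f^{\#}\|_{L^{p}}^{p}$ and $\|f-f_{bD}\|_{L^{p}}\le\|M^{d}g\|_{L^{p}}\lesssim\|f^{\#}\|_{L^{p}}$. For $p=1$, where $M^{d}g$ need not be integrable, run the same argument on the truncations $g_{N}:=\max(\min(g,N),-N)$: one has $M^{d}g_{N}\le N$ so $\|M^{d}g_{N}\|_{L^{1}}<\infty$ and absorption is legitimate, $g_{N}^{\#}\le2g^{\#}=2f^{\#}$ because $|g_{N}(x)-g_{N}(y)|\le|g(x)-g(y)|$, and $\|g_{N}\|_{L^{1}}\le\|g\|_{L^{1}}=\lambda_{0}\lambda(bD)\le\lambda(bD)\inf_{bD}f^{\#}\le\|f^{\#}\|_{L^{1}}$; thus $\|g_{N}\|_{L^{1}}\lesssim\|f^{\#}\|_{L^{1}}$ uniformly in $N$, and letting $N\to\infty$ (monotone convergence, $|g_{N}|\nearrow|g|$) finishes the case $p=1$.

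\textbf{Main obstacle.} The whole argument rests on the dyadic Calder\'on--Zygmund / good-$\lambda$ machinery, which is available precisely because $\lambda$ is doubling (Lemma \ref{measure lambda}); the only genuinely non-Euclidean feature is that $M^{d}g$ can never drop below the threshold $\lambda_{0}=\lambda(bD)^{-1}\|g\|_{L^{1}}$, so the good-$\lambda$ inequality holds only for $\lambda>\lambda_{0}$ and one must (i) treat the range $\lambda\le 2\lambda_{0}$ by hand using $\lambda(bD)<\infty$ and (ii) observe that $\lambda_{0}$ is itself controlled by $\inf_{bD}f^{\#}$ --- both painless once noticed --- while the endpoint $p=1$ needs the truncation device above. (In the paper this is subsumed in the cited results of \cite{BC}.)
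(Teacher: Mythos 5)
Your proof is correct, and it is a genuinely different route from the paper's: the paper does not prove Lemma \ref{lemma-sharp} at all but simply observes that $\lambda$ is doubling and invokes \cite[Theorems 1.4--1.6]{BC}, whereas you give a self-contained good-$\lambda$ argument on a space of homogeneous type with a Christ dyadic structure. Your two modifications to account for the finite total mass --- the threshold $\lambda_{0}$ below which the Calder\'on--Zygmund stopping time is not available, and the observation that $\lambda_{0}\le\inf_{bD}f^{\#}$ by taking $B=bD$ in the definition of the sharp function --- are exactly the right points and are handled correctly. Two small things are worth flagging. First, in Step~3 the bound $\lambda(\widehat Q)^{-1}\int_{\widehat Q}|g-g_{\widehat Q}|\,d\lambda\le g^{\#}(y)$ is not literal, since $f^{\#}$ is defined via averages over balls $B_{r}(z)$ and $\widehat Q$ is a dyadic cube; one must sandwich $\widehat Q$ between comparable balls and pass through a comparison of means, which costs an absolute constant. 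This is routine but should be said. Second, the $p=1$ endpoint is actually trivial once $\lambda(bD)<\infty$: your own chain $\|g\|_{L^{1}}=\lambda_{0}\lambda(bD)\le\lambda(bD)\inf_{bD}f^{\#}\le\|f^{\#}\|_{L^{1}}$ already proves the $p=1$ statement directly, so the truncation device and the good-$\lambda$ machinery are not needed there; the truncation as you wrote it works, but it is a detour rather than a necessity. The upshot: the paper's approach buys brevity via a citation, while yours buys transparency and makes explicit where the finite measure actually enters.
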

We note that in the unbounded domain we have $\|f\|_{L^p}\lesssim \|f^\#\|_{L^p}$, however, in the bounded domain, we will need to subtraction of the average of $f$ over the whole domain.

We also need the $L^p$ boundedness of $\EuScript C^\sharp$ and $\EuScript R$ which can be found in the proof of \cite[Theorem 7]{LS}.
\begin{lemma}\label{csharp-lp}
Suppose $1<p<\infty$ and $D\subset \mathbb C^n$, $n\geq 2$, is  a bounded domain whose boundary is of class $C^2$ and is strongly pseudoconvex. Then $\EuScript C^\sharp$ and $\EuScript R$ are  bounded operators on $L^p(bD, d\lambda)$.
\end{lemma}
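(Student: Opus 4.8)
The plan is to handle the two operators $\EuScript C^\sharp$ and $\EuScript R$ by entirely different mechanisms, following \cite{LS}. For $\EuScript C^\sharp$ I would argue that it is a Calder\'on--Zygmund operator on the space of homogeneous type $(bD,{\tt d},d\lambda)$: by \eqref{lambdab} one has $\lambda(B_r(w))\approx r^{2n}$, and by \eqref{gwz} (together with the comparison \eqref{gd}) the kernel $C^\sharp(w,z)=g(w,z)^{-n}$ satisfies the standard size and regularity estimates in \emph{both} variables. The only non-routine ingredient is the $L^2(bD,d\lambda)$ bound; for this I would invoke the $T(1)$-type argument of Lanzani--Stein, verifying the weak boundedness property together with $\EuScript C^\sharp(1),(\EuScript C^\sharp)^{*}(1)\in{\rm BMO}(bD,d\lambda)$, which relies on the explicit form of $g$. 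Granting the $L^2$ bound, the passage to $L^p$ for $1<p<\infty$ is routine: the kernel estimates \eqref{gwz} yield the weak-$(1,1)$ bound via a Calder\'on--Zygmund decomposition adapted to $(bD,{\tt d},d\lambda)$, Marcinkiewicz interpolation gives $1<p\le 2$, and since conditions b) and c) of \eqref{gwz} are symmetric in the two variables the adjoint kernel has the same form, so duality gives $2\le p<\infty$. Alternatively one may run the sharp-maximal-function argument of Lemmas \ref{maximal}--\ref{lemma-sharp}, i.e.\ quote \cite[Theorems 1.4--1.6]{BC} directly.

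For $\EuScript R$ I would use Schur's test, the essential point being that $bD$ is \emph{bounded} and that the exponent $2n-1$ in \eqref{cr}\,d) is strictly below the homogeneous dimension $2n$. Writing $\delta=\diam(bD)<\infty$ and splitting $bD$ into the dyadic shells $\{2^{-j-1}\le {\tt d}(w,z)<2^{-j}\}$, the estimate \eqref{lambdab} gives
\begin{align*}
\int_{bD}|R(w,z)|\,d\lambda(w)&\lesssim\sum_{2^{-j}\lesssim\delta}(2^{-j})^{-(2n-1)}\,\lambda\big(B_{2^{-j}}(z)\big)\\
&\lesssim\sum_{2^{-j}\lesssim\delta}2^{-j}\;\lesssim\;\delta,
\end{align*}
uniformly in $z\in bD$, the geometric series converging precisely because $2n-1<2n$; by the quasi-symmetry \eqref{metric d}\,2) the analogous bound holds for $\int_{bD}|R(w,z)|\,d\lambda(z)$ uniformly in $w$. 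Schur's lemma then gives boundedness of $\EuScript R$ on $L^p(bD,d\lambda)$ for every $1\le p\le\infty$, in particular for $1<p<\infty$. Since $d\lambda=\Lambda\,d\sigma$ with $\Lambda$ bounded above and below, it is immaterial whether one integrates against $d\sigma$ or $d\lambda$ in the definition of $\EuScript R$.

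The main obstacle is the $L^2$ bound for $\EuScript C^\sharp$, i.e.\ verifying the cancellation hypotheses of the $T(1)$ theorem on $(bD,{\tt d},d\lambda)$; this is the delicate part of \cite[Theorem 7]{LS} and rests on fine properties of the defining function $\rho$ and of the Leray--Levi measure. Everything else—the $L^2\to L^p$ extrapolation for $\EuScript C^\sharp$ and the Schur estimate for $\EuScript R$—is standard, so for the write-up I would simply cite \cite[Theorem 7]{LS} for the $L^2$ bound and supply the short extrapolation and Schur arguments.
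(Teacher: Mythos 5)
Your proposal is correct and matches the paper's approach: the paper simply cites \cite[Theorem 7]{LS} for both bounds, and what you have written out is exactly the content behind that citation (the $T(1)$-theorem argument of Lanzani--Stein for $\EuScript C^\sharp$, and Schur's test exploiting the subcritical exponent $2n-1<2n$ for $\EuScript R$, as the paper itself summarizes in its introduction). One small remark on the Schur computation: the step $\lambda(B_{2^{-j}}(z))\lesssim 2^{-2nj}$ from \eqref{lambdab} is stated there only for radii $\leq 1$; for the finitely many shells with $2^{-j}>1$ (if $\diam(bD)>1$) one should instead crudely bound $\lambda(\text{shell})\leq\lambda(bD)<\infty$, which is harmless since only finitely many such terms occur, but it is worth saying explicitly.
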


We now prove the argument (1) in Theorem \ref{cauchy}.

\begin{proof}[Proof of $(1)$ in Theorem \ref{cauchy}]

\smallskip

{Necessity:}\ 
\smallskip

We first prove necessity, namely that $b\in {\rm BMO}(bD,d\lambda)$ implies the boundedness of $[b, \EuScript C]$. 
Since $b\in {\rm BMO}(bD,d\lambda)$,
without loss of generality we assume that $$\int_{bD} b(z)d\lambda(z) =0.$$
Otherwise we will just use $b(z)-{1\over \lambda(bD)}\int_{bD} b(w)d\lambda(w)$ to replace $b$.

We can write
$$[b,\EuScript C]=[b,\EuScript C^\sharp]+[b,\EuScript R].$$
From Lemma \ref{csharp-lp} we can see that $\EuScript C^\sharp$ is  bounded on $L^p(bD, d\lambda)$.
Since the kernel of $\EuScript C^\sharp$ is a standard kernel on $bD\times bD$, according to \cite[Theorerm 3.1]{KL1},  we can obtain that $[b, \EuScript C^\sharp]$ is bounded on  $L^p(bD, d\lambda)$ and
\begin{align*}
\|[b,\EuScript C^\sharp]\|_{L^p(bD, d\lambda)\rightarrow L^p(bD, d\lambda)}\lesssim \|b\|_{{\rm BMO} (bD,d\lambda)}.
\end{align*}
Thus, it suffices to show that 
\begin{align}\label{[b,R] bounded}
\|[b,\EuScript R]\|_{L^p(bD, d\lambda)\rightarrow L^p(bD, d\lambda)}\lesssim \|b\|_{{\rm BMO} (bD,d\lambda)}.
\end{align}

To see this, we first prove that for every $f\in L^p(bD, d\lambda)$,
\begin{align}\label{Tsharp}
\big\|\left([b,\EuScript R](f)\right)^\#\big\|_{L^p(bD, d\lambda)\rightarrow L^p(bD, d\lambda)}\lesssim \|b\|_{{\rm BMO} (bD,d\lambda)}\|f\|_{L^p(bD, d\lambda)}.
\end{align}

Since $bD$ is bounded, there exists $\overline C>0$ such that for any $B_r(z)\subset bD$ we have $r<\overline C$. 
For any $\tilde z \in bD$, let us fix a ball $B_r=B_r(z_0)\subset bD$ containing $\tilde z$,  and let $z$ be any point of $B_r$. 
Now take $j_0=\lfloor\log_2 {\overline C\over r}\rfloor+1$. Since ${\tt d}$ is a quasi-distance, 
 there exists $i_0\in\mathbb Z^+$, independent of $z$, $r$, such that ${\tt d}(w,z)>c_R r$ whenever $w\in bD\setminus B_{2^{i_0} r}$, where $c_R$ is in \eqref{cr}.
We then write
\begin{align*}
&[b,\EuScript R](f)(z)\\
&=b(z)\EuScript R(f)(z)-\EuScript R (bf)(z)\\
&=\big(b(z)-b_{B_r}\big)\EuScript R(f)(z)-\EuScript R\big((b-b_{B_r})f\chi_{ bD \cap B_{2^{i_0} r}} \big)(z)
-\EuScript R\big((b-b_{B_r})f\chi_{bD\setminus B_{2^{i_0} r}} \big)(z)\\
&=: I(z)+I\!I(z)+I\!I\!I(z).
\end{align*}

For $I$, by H\"older's inequality and the John-Nirenberg inequality (Lemma \ref{JN}), we have
\begin{align*}
&{1\over \lambda(B_r)}\int_{B_r}|I(z)-I_{B_r}|d\lambda(z)\\
&\leq{2\over \lambda(B_r)}\int_{B_r}|I(z)|d\lambda(z)\\
&={2\over \lambda(B_r)}\int_{B_r}\big|b(z)-b_{B_r} \big| \big|\EuScript R(f)(z)\big|d\lambda(z)\\
&\lesssim \left( {1\over \lambda(B_r)}\int_{B_r}\big|b(z)-b_{B_r} \big|^{s'}d\lambda(z)\right)^{{1\over s'}}
 \left( {1\over \lambda(B_r)}\int_{B_r}\big|\EuScript R(f)(z) \big|^{s}d\lambda(z)\right)^{1\over s}\\
 &\lesssim \|b\|_{{\rm BMO} (bD,d\lambda)} \big(M(|\EuScript R f|^s)(\tilde z) \big)^{1\over s},
\end{align*}
 where $1<s<p<\infty$.

For $I\!I$, since $\EuScript R$ is bounded on $L^q(bD, d\lambda)$, $1<q<\infty$, we have
\begin{align*}
&{1\over \lambda(B_r)}\int_{B_r}|I\!I(z)-I\!I_{B_r}|d\lambda(z)\\
&\leq{2\over \lambda(B_r)}\int_{B_r}|I\!I(z)|d\lambda(z)\\
&={2\over \lambda(B_r)}\int_{B_r}\big|\EuScript R\big((b-b_{B_r})f\chi_{bD\cap B_{2^{i_0} r}} \big)(z)\big|d\lambda(z)\\
&\lesssim \left( {1\over \lambda(B_r)}\int_{B_r}\big|\EuScript R\big((b-b_{B_r})f\chi_{bD\cap B_{2^{i_0} r}} \big)(z)\big|^qd\lambda(z) \right)^{1\over q}\\
&\lesssim \left( {1\over \lambda(B_r)}\int_{bD\cap B_{2^{i_0} r}}\big|b(z)-b_{B_r}\big|^q |f(z)|^qd\lambda(z) \right)^{1\over q}\\
&\lesssim \left( {1\over \lambda(B_r)}\int_{bD\cap B_{2^{i_0} r}}\big|b(z)-b_{B_r}\big|^{qv'} d\lambda(z) \right)^{1\over qv'}
\left( {1\over \lambda(B_r)}\int_{bD\cap B_{2^{i_0} r}} |f(z)|^{qv}d\lambda(z) \right)^{1\over qv}\\
&\lesssim \|b\|_{{\rm BMO} (bD,d\lambda)} \big(M(| f|^\beta)(\tilde z) \big)^{1\over \beta},
\end{align*}
 where we have chosen $q,v\in (1,\infty)$ such that $1<qv<p<\infty$ and have set $\beta:=qv$.

To estimate $I\!I\!I$, observe that if $i_0\geq j_0$, then $bD\setminus B_{2^{i_0} r}=\emptyset$ and $| I\!I\!I(z)-I\!I\!I(z_0)|=0$.
If $i_0<j_0$, then we have
\begin{align*}
&\left| I\!I\!I(z)-I\!I\!I(z_0)\right|\\
&=\big|\EuScript R\big((b-b_{B_r})f\chi_{bD\setminus B_{2^{i_0} r}} \big)(z)
-\EuScript R\big((b-b_{B_r})f\chi_{bD\setminus B_{2^{i_0} r}} \big)(z_0)\big|\\
&\leq \int_{bD\setminus B_{2^{i_0} r}}\big| R(w,z)-R(w,z_0) \big| \left|b(w)-b_{B_r} \right| |f(w)|d\lambda(w)\\
&\leq {\tt d}(z,z_0) \int_{bD\setminus B_{2^{i_0} r}}{1\over {\tt d}(w, z_0)^{2n}}\left|b(w)-b_{B_r} \right| |f(w)|d\lambda(w)\\
&\leq  r \left( \int_{bD\setminus B_{2^{i_0} r}}{1\over {\tt d}(w, z_0)^{2n}}\left|b(w)-b_{B_r} \right|^{s'}d\lambda(w)\right)^{1\over s'} \left(\int_{bD\setminus B_{2^{i_0} r}}{1\over {\tt d}(w, z_0)^{2n}}|f(w)|^sd\lambda(w) \right)^{1\over s},
\end{align*}
 where $1<s<p<\infty$.
 Since $bD$ is bounded,  we can obtain
\begin{align*}
\int_{bD\setminus B_{2^{i_0} r}}{1\over {\tt d}(w, z_0)^{2n}}|f(w)|^sd\lambda(w) 
&\leq\sum_{j=i_0}^{j_0}\int_{2^j r\leq d(w, z_0)\leq 2^{j+1} r}{1\over {\tt d}(w, z_0)^{2n}}|f(w)|^sd\lambda(w) \\
&\leq \sum_{j=i_0}^{j_0}{1\over (2^j r)^{2n}}
\int_{d(w, z_0)\leq 2^{j+1} r}|f(w)|^sd\lambda(w)\\
&\lesssim\sum_{j=i_0}^{j_0}  {1\over \lambda(B_{2^{j+1} r})}\int_{B_{2^{j+1} r}}|f(w)|^sd\lambda(w)\\
&\lesssim  j_0 M(|f|^s)(\tilde z).
\end{align*}
Similarly, by the John--Nirenberg inequality, we have
\begin{align*}
 \int_{bD\setminus B_{2^{i_0} r}}{1\over {\tt d}(w, z_0)^{2n}}\left|b(w)-b_{B_r} \right|^{s'}d\lambda(w)
 &\lesssim\sum_{j=i_0}^{j_0}  {1\over \lambda(B_{2^{j+1} r})}\int_{B_{2^{j+1} r}} \left|b(w)-b_{B_r} \right|^{s'}d\lambda(w)\\
&\lesssim j_0\|b\|^{s'}_{{\rm BMO} (bD,d\lambda)}.
\end{align*}
Thus,
\begin{align*}
\left|I\!I\!I(z)-I\!I\!I(z_0)\right|\lesssim  r j_0  \|b\|_{{\rm BMO} (bD,d\lambda)}\left(M(|f|^s)(\tilde z)\right)^{1\over s}.
\end{align*}
Therefore,
\begin{align*}
{1\over \lambda(B_r)}\int_{B_r}\big| I\!I\!I(z)-I\!I\!I_{B_r}\big|d\lambda(z)
&\leq {2\over \lambda(B_r)}\int_{B_r}\big| I\!I\!I(z)-I\!I\!I(z_0)\big|d\lambda(z)\\
&\lesssim r  j_0  \|b\|_{{\rm BMO} (bD,d\lambda)}\left(M(|f|^s)(\tilde z)\right)^{1\over s}\\
&\lesssim r  \big(\log_2\big( {\overline C\over r} \big)+1\big) \|b\|_{{\rm BMO} (bD,d\lambda)}\left(M(|f|^s)(\tilde z)\right)^{1\over s}\\
&\lesssim \|b\|_{{\rm BMO} (bD,d\lambda)}\left(M(|f|^s)(\tilde z)\right)^{1\over s},
\end{align*}
where the last inequality comes from the fact that $ r \log_2( {\overline C\over r})$ is uniformly bounded.

By the above estimates we  obtain that
\begin{align*}
|([b,\EuScript R]f)^\# (\tilde z)|
&\lesssim
\|b\|_{{\rm BMO} (bD,d\lambda)}\left( \big(M(|\EuScript R f|^s)(\tilde z) \big)^{1\over s}+ \big(M(| f|^\beta)(\tilde z) \big)^{1\over \beta}
+\left(M(|f|^s)(\tilde z)\right)^{1\over s}\right),
\end{align*}
which, together with  Lemma \ref{maximal} and the fact that $\EuScript R$ is bounded on $L^p(bD, d\lambda)$, implies that 
 \eqref{Tsharp} holds.

Based on \eqref{Tsharp}, we now prove \eqref{[b,R] bounded}. We note that in the unbounded domain we have that $\|g\|_{L^p}\lesssim \|g^\#\|_{L^p}$, however, in this bounded domain, we will need a subtraction of the average of $g$ over the whole domain on the left-hand side of this inequality (see Lemma \ref{lemma-sharp}). 

Thus, we have that
\begin{align}\label{[b,R] Lp}
\left\|[b,\EuScript R]f\right\|_{L^p(bD, d\lambda)}&\leq \Big\|[b,\EuScript R]f - {1\over \lambda(bD)}\int_{bD} [b,\EuScript R]f(z) d\lambda(z)\Big\|_{L^p(bD, d\lambda)}\\
&\qquad+ \Big\| {1\over \lambda(bD)}\int_{bD} [b,\EuScript R]f(z) d\lambda(z)\Big\|_{L^p(bD, d\lambda)} \nonumber\\
&\leq \Big\| ([b,\EuScript R]f)^\# \Big\|_{L^p(bD, d\lambda)} + {1\over \lambda(bD)}\int_{bD} \big|[b,\EuScript R]f(z)\big| d\lambda(z)\  \lambda(bD)^{1\over p} \nonumber\\
&\lesssim\|b\|_{ {\rm BMO} (bD,d\lambda)}\|f\|_{L^p(bD, d\lambda)}+  {1\over \lambda(bD)^{1\over p'}}\int_{bD} \big|[b,\EuScript R]f(z)\big| d\lambda(z),\nonumber
\end{align}
where the second inequality follows from Lemma \ref{lemma-sharp} and the  last inequality 
follows from \eqref{Tsharp}. Now it suffice to show that 
\begin{align}\label{[b,R] L1}
\int_{bD} \big|[b,\EuScript R]f(z)\big| d\lambda(z)\lesssim  \lambda(bD)^{1\over p' }\| f\|_{L^p(bD,d\lambda)}  \|b\|_{{\rm BMO}(bD,d\lambda)}.
\end{align}

Note that
\begin{align*}
\int_{bD} \big|[b,\EuScript R]f(z)\big| d\lambda(z) 
&\leq  \int_{bD} \big|\EuScript R(f)(z) \,b(z)\big|\, d\lambda(z)   + \int_{bD} \big|\EuScript R(bf)(z)\big|  d\lambda(z)\\
&=: \mathfrak A_1+ \mathfrak A_2. 
\end{align*}
For the term $ \mathfrak A_1$, by H\"older's inequality, the John--Nirenberg inequality and recalling that $b_{bD}=0$, we have that
\begin{align*}
 \mathfrak A_1&\leq  \| \EuScript R(f)\|_{L^p(bD,d\lambda)} \|b\|_{L^{p'}(bD,d\lambda)}
= \| \EuScript R(f)\|_{L^p(bD,d\lambda)} \|b - b_{bD}\|_{L^{p'}(bD,d\lambda)}\\
&\lesssim \| f\|_{L^p(bD,d\lambda)} \cdot \lambda(bD)^{1\over p' }\cdot \bigg( {1\over\lambda(bD)}\int_{bD} | b(z)-b_{bD} |^{p'}d\lambda(z) \bigg)^{1\over p'}  \\
&\lesssim  \lambda(bD)^{1\over p' }\| f\|_{L^p(bD,d\lambda)}  \|b\|_{{\rm BMO}(bD,d\lambda)}.
\end{align*}
For the term $ \mathfrak A_2$, by H\"older's inequality, the John--Nirenberg inequality and recalling that $b_{bD}=0$, we have that
\begin{align*}
 \mathfrak A_2&\leq \lambda(bD)^{1\over \gamma'}  \| \EuScript R(bf)\|_{L^\gamma(bD,d\lambda)} \\
& \lesssim\lambda(bD)^{1\over \gamma'} \| bf\|_{L^\gamma(bD,d\lambda)} \\
&\lesssim \lambda(bD)^{1\over \gamma'} \| f\|_{L^p(bD,d\lambda)} \cdot \| b\|_{L^{\mu}(bD,d\lambda)}  \\
&= c\lambda(bD)^{1\over \gamma'} \| f\|_{L^p(bD,d\lambda)} \cdot \|b - b_{bD}\|_{L^{\mu}(bD,d\lambda)} \\
&\lesssim \lambda(bD)^{1\over \gamma'} \| f\|_{L^p(bD,d\lambda)} \lambda(bD)^{1\over \mu } \|b\|_{{\rm BMO}(bD,d\lambda)}\\
&\lesssim \lambda(bD)^{1\over p'} \| f\|_{L^p(bD,d\lambda)}  \|b\|_{{\rm BMO}(bD,d\lambda)},
\end{align*}
where we have chosen $\gamma, \mu>1$ satisfying
$ {1\over \gamma}  = {1\over p } +{1\over \mu}.$
Therefore, \eqref{[b,R] L1} holds, which, together with \eqref{[b,R] Lp}, implies that 
\eqref{[b,R] bounded} holds. Hence, the proof of the necessity part is complete.

\smallskip
{Sufficiency:}\ 
\smallskip

We next turn to proving the sufficient condition, namely that if $[b,\EuScript C]$ is bounded, then $b\in {\rm BMO}(bD,d\lambda)$.  Suppose $1<p<\infty$. Assume that $b$ is in $L^1(bD,d\lambda)$ and that $\left\|[b,\EuScript C]\right\|_{L^p(bD, d\lambda)\to L^p(bD, d\lambda)} <\infty$. 

We now denote by $ C^\sharp_1(w,z)$ and $ C^\sharp_2(w,z)$
the real and imaginary parts of $ C^\sharp(w,z)$, respectively.
And denote by $ R_1(w,z)$ and $ R_2(w,z)$
the real and imaginary parts of $ R(w,z)$, respectively.
Since $ C^\sharp(w,z)$ is a standard Calder\'on--Zygmund kernel (we refer to \eqref{gwz}), it is direct to verify the following argument: let
\begin{align}\label{gamma0}
\gamma_0:={{3 \over  4^{n+2}10 C_d^{4n+2} C_g C_R \max\{ {5A_1A_2 (2C_d)^{2n+1}C_g }, \sqrt2C_dA_2 + \sqrt2C_d^2\}},}
\end{align}
then for every ball $B=B_r(z_0)\subset bD$ with $r<\gamma_0$, there exists another ball
$\tilde B = B_r(w_0)\subset bD$ with  $A_3r \leq {\tt d}(w_0,z_0) \leq (A_3+1)r$, where
$A_3 ={\max\{ 5\sqrt2 A_1A_2 (2C_d)^{2n+1} C_g,  2C_dA_2 + 2C_d^2\}}$, such that
at least one of the following properties holds:

$\mathfrak a)$ for every $z\in B$ and $w\in \tilde B$, $ C^\sharp_1(w,z)$ does not change sign and $$| C^\sharp_1(w,z)|\geq { 3\sqrt2\over 10(2C_d^2)^{2n} C_g {\tt d}(w,z)^{2n} };$$

$\mathfrak b)$ for every $z\in B$ and $w\in \tilde B$, $ C^\sharp_2(w,z)$ does not change sign and $$| C^\sharp_2(w,z)|\geq { 3\sqrt2\over 10(2C_d^2)^{2n} C_g {\tt d}(w,z)^{2n} }.$$

Here  the constants $A_1,C_g, C_R$ are defined in \eqref{gwz}, \eqref{gd}, \eqref{cr}, respectively, and the constants 
$A_2$ and $C_d$ are from \eqref{metric d}.

Without lost of generality, we assume that the property $\mathfrak a)$ holds. Then combining with the size estimate of $R(w,z)$ in \eqref{cr}, we obtain that for every $z\in B$ and $w\in \tilde B$, $ C^\sharp_1(w,z)+ R_1(w,z)$
does not change sign and that 
\begin{align}\label{kernel lower bound C1}
| C^\sharp_1(w,z)+ R_1(w,z)|\geq { 3\sqrt2 \over 20 (2C_d^2)^{2n} C_g {\tt d}(w,z)^{2n} }.
\end{align}

We test the ${\rm BMO}(bD,d\lambda)$ condition on the case of balls with big radius and small radius.

Case 1: In this case we work with balls with a large radius, $r\geq \gamma_0$. 

By \eqref{lambdab} and by the fact that 
$\lambda(B)\geq \lambda( B_{\gamma_0}(z_0)) \approx \gamma_0^{2n}$, we obtain that
\begin{align*}
{1\over \lambda(B)} \int_B|b(w)-b_B|d\lambda(w)\lesssim \gamma_0^{-2n} \|b\|_{L^1(bD, d\lambda)}.
\end{align*}

Case 2: In this case we work with balls with a small radius, $r<\gamma_0$.

We aim to prove that for every fixed ball $B=B_r(z_0)\subset bD$ with radius $r<\gamma_0$,
\begin{align*}
{1\over \lambda(B)} \int_B|b(w)-b_B|d\lambda(w)\lesssim \left\|[b,\EuScript C]\right\|_{L^p(bD, d\lambda)\to L^p(bD, d\lambda)}.
\end{align*}

Now let $\tilde B=B_r(w_0)$ be the ball chosen as above, and let $m_b(\tilde B)$ be the median value of $b$ on the ball $\tilde B$ with respect to the measure $d\lambda$ defined as follows:
$m_b(\tilde B)$ is a real number that satisfies simultaneously
$$ \lambda(\{z\in \tilde B: b(z)>m_b(\tilde B)\})\leq {1\over2}\lambda(\tilde B)\quad {\rm and}\quad \lambda(\{z\in \tilde B: b(z)<m_b(\tilde B)\}) \leq {1\over2}\lambda(\tilde B).$$
Then, following the idea in \cite[Proposition 3.1]{LOR} (see also \cite{TYY}), by the definition of median value, we define
$F_1:=\{ w\in \tilde B: b(w)\leq m_b(\tilde B) \}$ and
$F_2:=\{ w\in \tilde B: b(w)\geq m_b(\tilde B) \}$. Then it is direct that $\tilde B=F_1\cup F_2$, and moreover, from the definition of $m_b(\tilde B)$, we see that
\begin{align}\label{f1f2-1}
\lambda(F_i) \geq{1\over 2} \lambda(\tilde B),\quad i=1,2.
\end{align}

Next we define
\begin{align*}
E_1=\{ z\in B: b(z)\geq m_b(\tilde B) \}\quad{\rm and}\quad
E_2=\{ z\in B: b(z)< m_b(\tilde B) \},
\end{align*}
then $B=E_1\cup E_2$ and $E_1\cap E_2=\emptyset$. 
Then it is clear that 
\begin{equation}\begin{split}\label{bx-by0-1}
b(z)-b(w) &\geq 0, \quad (z,w)\in E_1\times F_1\\
b(z)-b(w) &< 0, \quad (z,w)\in E_2\times F_2.
\end{split}\end{equation}
And for $(z,w)$ in $(E_1\times F_1 )\cup (E_2\times F_2)$, we have 
\begin{align}\label{bx-by-1}
\nonumber|b(z)-b(w)| 
&= |b(z)-m_b(\tilde B) +m_b(\tilde B) -b(w)| \\
&=|b(z)-m_b(\tilde B)| +|m_b(\tilde B) -b(w)| \\
&\nonumber\geq |b(z)-m_b(\tilde B)|. 
\end{align}

Therefore, from \eqref{f1f2-1}, \eqref{kernel lower bound C1} and \eqref{bx-by-1} 
we obtain that 
\begin{align*}
&{1\over \lambda(B)}\int_{E_{1}}\big|b(z)-m_b(\tilde B)\big|d\lambda(z)\\
&\lesssim 
{1\over \lambda(B)}{\lambda{(F_1)}\over \lambda(B)}\int_{E_{1}}\big|b(z)-m_b(\tilde B)\big|d\lambda(z)\\
&\lesssim
{1\over \lambda(B)}\int_{E_{1}}\int_{F_{1}}{1\over {\tt d}(w,z)^{2n}}\big|b(z)-b(w)\big|d\lambda(w)d\lambda(z)\\
&\lesssim
{1\over \lambda(B)}\int_{E_{1}}\int_{F_{1}}| C^\sharp_1(w,z)+ R_1(w,z)|\big(b(z)-b(w)\big)d\lambda(w)d\lambda(z)\\
&\lesssim
{1\over \lambda(B)}\int_{E_{1}}\bigg|\int_{F_{1}}C(w,z)\big(b(z)-b(w)\big)d\lambda(w)\bigg|d\lambda(z)\\
&=
{1\over \lambda(B)}\int_{E_{1}}\left|[b, \EuScript C](\chi_{F_1})(z)\right|d\lambda(z),
\end{align*}
where the last but second inequality follows from the fact that $C^\sharp_1(w,z)+ R_1(w,z)$ is the real part of 
$C(w,z)$.

Then, by using H\"older's inequality and the fact that $\left\|[b,\EuScript C]\right\|_{L^p(bD, d\lambda)\to L^p(bD, d\lambda)} <\infty$, we further obtain 
\begin{align*}
&{1\over \lambda(B)}\int_{E_{1}}\big|b(z)-m_b(\tilde B)\big|d\lambda(z)\\
&\lesssim
{1\over \lambda(B)}\left(\lambda(E_1)\right)^{1\over p'}\bigg(\int_{E_{1}}\left|[b, \EuScript C](\chi_{F_1})(z)\right|^pd\lambda(z)\bigg)^{1\over p}\\
&\lesssim
 {1\over \lambda(B)}\left(\lambda(E_1)\right)^{1\over p'}\|[b, \EuScript C]\chi_{F_1}\|_{L^p(bD,d\lambda)}\\
 &\lesssim
  {1\over \lambda(B)}\left(\lambda(E_1)\right)^{1\over p'}\left(\lambda(F_1)\right)^{1\over p}
\|[b, \EuScript C]\|_{L^p(bD,d\lambda)\to L^p(bD,d\lambda)}\\
&\lesssim
 {1\over \lambda(B)}\left(\lambda(E_1)+\lambda(F_1)\right)
\|[b, \EuScript C]\|_{L^p(bD,d\lambda)\to L^p(bD,d\lambda)},
\end{align*}
where the last inequality follows from a direct calculation:
if $\lambda\big(E_1\big) \geq \lambda\big( F_1\big)$, then 
$$\big(\lambda\big(E_1\big)\big)^{1\over p'}\big(\lambda\big( F_1\big)\big)^{1\over p} \leq \big(\lambda\big(E_1\big)\big)^{1\over p'}\big( \lambda\big(E_1\big) \big)^{1\over p}\leq \lambda\big(E_1\big);$$
if $\lambda\big(E_1\big) \leq \lambda\big( F_1\big)$, then 
$$\big(\lambda\big(E_1\big)\big)^{1\over p'}\big(\lambda\big( F_1\big)\big)^{1\over p} \leq \big(\lambda\big(F_1\big)\big)^{1\over p'}\big( \lambda\big(F_1\big) \big)^{1\over p}\leq \lambda\big(F_1\big).$$

Similarly, we can obtain that 
\begin{align*}
{1\over \lambda(B)}\int_{E_{2}}\big|b(z)-m_b(\tilde B)\big|d\lambda(z)
\lesssim 
{1\over \lambda(B)}\left(\lambda(E_2)+\lambda(F_2)\right)
\|[b, \EuScript C]\|_{L^p(bD,d\lambda)\to L^p(bD,d\lambda)}.
\end{align*}

Consequently,
\begin{align*}
&{1\over \lambda(B)}\int_{B}\big|b(z)-m_b(\tilde B)\big|d\lambda(z)\\
&={1\over \lambda(B)}\int_{E_1}\big|b(z)-m_b(\tilde B)\big|d\lambda(z)
+{1\over \lambda(B)}\int_{E_2}\big|b(z)-m_b(\tilde B)\big|d\lambda(z)\\
&\lesssim 
{1\over \lambda(B)}\left(\lambda(E_1)+\lambda(F_1)+\lambda(E_2)+\lambda(F_2)\right)
\|[b, \EuScript C]\|_{L^p(bD,d\lambda)\to L^p(bD,d\lambda)}\\
&\lesssim
\|[b, \EuScript C]\|_{L^p(bD,d\lambda)\to L^p(bD,d\lambda)}.
\end{align*}

Therefore,
\begin{align*}
{1\over \lambda(B)}\int_{B}\big|b(z)-b_B\big|d\lambda(z)
\leq {2\over \lambda(B)}\int_{B}\big|b(z)-m_b(\tilde B)\big|d\lambda(z)
\lesssim \big\| [b,\EuScript C] \big\|_{L^p(bD, d\lambda)\rightarrow L^p(bD, d\lambda)}.
\end{align*}
This finishes the proof of (1) in Theorem \ref{cauchy}.
\end{proof}

\subsection{Characterisation of ${\rm VMO}(bD,d\lambda)$ via the Commutator $[b, \EuScript C]$}

We now prove the argument (2) in Theorem \ref{cauchy}.

\medskip

\begin{proof}[Proof of $(2)$ in Theorem \ref{cauchy}]
{\bf Sufficient condition:}
Assume that $1<p<\infty$ and that $[b, \EuScript C]$ is compact on $L^p(bD, d\lambda)$, then $[b,\EuScript C]$ is bounded on $L^p(bD, d\lambda)$. By the argument (1) in Theorem \ref{cauchy}, we have $b\in {\rm BMO}(bD, d\lambda)$. Without loss of generality, we may assume that $\|b\|_{  {\rm BMO}(bD, d\lambda) }=1$. 

\textcolor{black}{
To show $b\in {\rm VMO}(bD, d\lambda)$, we  seek a contradiction.  In its simplest form, the contradiction is that there is no bounded operator $T \;:\; \ell^{p} (\mathbb N) \to \ell^{p} (\mathbb N)$ with $Te_{j } = T e_{k} \neq 0$ for all $j,k\in \mathbb N$.  Here, $e_{j}$ is the 
standard basis for $\ell^{p} (\mathbb N)$.  }

\textcolor{black}{
The main step is to construct the approximates to a standard basis in $\ell^{p}$, namely a sequence of functions $\{g_{j}\}$ such that 
$ \lVert g_{j } \rVert _{L^p(bD, d \lambda } \simeq 1$,  and for a nonzero $ \phi$,   we have $\lVert \phi - [b, \EuScript C] g_{j}\rVert 
 _{L^p(bD, d \lambda) } <2^{-j}$.  } 

Suppose that $b\notin {\rm VMO}(bD, d\lambda)$, then there exist $\delta_0>0$ and a sequence $\{B_j\}_{j=1}^\infty:=\{B_{r_j}(z_j)\}_{j=1}^\infty$ of balls such that 
\begin{align}\label{delta0}
{1\over \lambda(B_j)} \int_{B_j} |b(z)-b_{B_j}|d\lambda(z)\geq \delta_0. 
\end{align}

Without lost of generality, we assume that for all $j$, $r_j<\gamma_0$, where $\gamma_0$ is 
as in \eqref{gamma0}.

Now choose a subsequence $\{B_{j_i}\}$ of $\{B_j\}$ such that
\begin{align}\label{ratio1}
r_{j_{i+1}} \leq{1\over 4c_\lambda} r_{j_{i}},
\end{align}
where $c_\lambda$ is the constant as in \eqref{lambdab}.

For the sake of simplicity we drop the subscript $i$, i.e., we still denote $\{B_{j_i}\}$ by $\{B_{j}\}$.

Following the proof of (1) of Theorem \ref{cauchy}, for each such $B_j$, we can choose a corresponding $\tilde B_j$.
Now let $m_b(\tilde B_j)$ be the median value of $b$ on the ball $\tilde B_j$ with respect to the measure $d\lambda$.
Then,  by the definition of median value, we can
find disjoint subsets $F_{j,1},F_{j,2}\subset \tilde B_j$ such that
\begin{align*}
F_{j,1}\subset\{ w\in \tilde B_j: b(w)\leq m_b(\tilde B_j) \},\quad
 F_{j,2}\subset\{ w\in \tilde B_j: b(w)\geq m_b(\tilde B_j) \},
\end{align*}
and  
\begin{align}\label{f1f2-1 1}
\lambda(F_{j,1}) = \lambda(F_{j,2}) ={\lambda(\tilde B_j)\over 2} .
\end{align}

Next we define
\begin{align*}
 E_{j,1}=\{ z\in B: b(z)\geq m_b(\tilde B_j) \},\quad
E_{j,2}=\{ z\in B: b(z)< m_b(\tilde B_j) \},
\end{align*}
then $B_j=E_{j,1}\cup E_{j,2}$ and $E_{j,1}\cap E_{j,2}=\emptyset$. 
Then it is clear that 
\begin{equation}\begin{split}\label{bx-by0-1 1}
b(z)-b(w) &\geq 0, \quad (z,w)\in E_{j,1}\times F_{j,1},\\
b(z)-b(w) &< 0, \quad (z,w)\in E_{j,2}\times F_{j,2}.
\end{split}\end{equation}
And for $(z,w)$ in $(E_{j,1}\times F_{j,1} )\cup (E_{j,2}\times F_{j,2})$, we have 
\begin{align}\label{bx-by-1 1}
\nonumber|b(z)-b(w)| 
&= |b(z)-m_b(\tilde B_j) +m_b(\tilde B_j) -b(w)| \\
&=|b(z)-m_b(\tilde B_j)| +|m_b(\tilde B_j) -b(w)| \\
&\nonumber\geq |b(z)-m_b(\tilde B_j)|. 
\end{align}

We now consider 
$$\widetilde F_{j,1}:= F_{j,1}\backslash \bigcup_{\ell=j+1}^\infty \tilde B_\ell\quad{\rm and}\quad \widetilde F_{j,2}:= F_{j,2}\backslash \bigcup_{\ell=j+1}^\infty \tilde B_\ell,\quad {\rm for}\ j=1,2,\ldots.$$
Then, based on the decay condition of the radius $\{r_j\}$, we obtain that
for each $j$,
\begin{align}\label{Fj1}
 \lambda(\widetilde F_{j,1}) &\geq \lambda(F_{j,1})- \lambda\Big( \bigcup_{\ell=j+1}^\infty \tilde B_\ell\Big) \geq
{1\over 2} \lambda(\tilde B_j)-\sum_{\ell=j+1}^\infty \lambda\big(  \tilde B_\ell\big)\nonumber\\
&\geq {1\over 2} \lambda(\tilde B_j)- {c_\lambda^2\over (4c_\lambda)^{2n}-1}\lambda(\tilde B_j)\geq {1\over 4} \lambda(\tilde B_j).
\end{align}

Now for each $j$, we have that
\begin{align*}
&{1\over \lambda(B_j)} \int_{B_j} |b(z)-b_{B_j}|d\lambda(z)\\
&\leq{2\over \lambda(B_j)}\int_{B_{j}}\big|b(z)-m_b(\tilde B_j)\big|d\lambda(z)\\
&= {2\over \lambda(B_j)}\int_{E_{j,1}}\big|b(z)-m_b(\tilde B_j)\big|d\lambda(z) + {2\over \lambda(B_j)}\int_{E_{j,2}}\big|b(z)-m_b(\tilde B_j)\big|d\lambda(z).
\end{align*}
Thus, combing with \eqref{delta0} and the above inequalities, we obtain that as least one of the following inequalities holds:
\begin{align*}
{2\over \lambda(B_j)}\int_{E_{j,1}}\big|b(z)-m_b(\tilde B_j)\big|d\lambda(z) \geq {\delta_0\over2},\quad 
{2\over \lambda(B_j)}\int_{E_{j,2}}\big|b(z)-m_b(\tilde B_j)\big|d\lambda(z) \geq {\delta_0\over2}.
\end{align*}

We may assume that the first one holds, i.e., 
\begin{align*}
{2\over \lambda(B_j)}\int_{E_{j,1}}\big|b(z)-m_b(\tilde B_j)\big|d\lambda(z) \geq {\delta_0\over2}.
\end{align*}

Therefore, for each $j$, from \eqref{f1f2-1 1} and \eqref{bx-by-1 1} we obtain that 
\begin{align*}
{\delta_0\over4}&\leq{1\over \lambda(B_j)}\int_{E_{j,1}}\big|b(z)-m_b(\tilde B_j)\big|d\lambda(z)\\
&\lesssim 
{1\over \lambda(B_j)}{\lambda{(\widetilde F_{j,1})}\over \lambda(B_j)}\int_{E_{j,1}}\big|b(z)-m_b(\tilde B_j)\big|d\lambda(z)\\
&\lesssim
{1\over \lambda(B_j)}\int_{E_{j,1}}\int_{\widetilde F_{j,1}}{1\over {\tt d}(w,z)^{2n}}\big|b(z)-b(w)\big|d\lambda(w)d\lambda(z).
\end{align*}
Following the same estimate as in the proof of (1) of Theorem \ref{cauchy}, we obtain that 
\begin{align*}
{1\over \lambda(B_j)}\int_{E_{j,1}}\big|b(z)-m_b(\tilde B_j)\big|d\lambda(z)
&\lesssim
{1\over \lambda(B_j)}\int_{E_{j,1}}\left|[b, \EuScript C](\chi_{\widetilde F_{j,1}})(z)\right|d\lambda(z)\\
&=
{1\over \lambda(B_j)^{1\over p'}}\int_{E_{j,1}}\left|[b, \EuScript C]\bigg({\chi_{\widetilde F_{j,1}} \over \lambda(B_j)^{1\over p}}\bigg)(z)\right|d\lambda(z),
\end{align*}
where in the last equality, we use $p'$ to denote the conjugate index of $p$. 

Next, by using H\"older's
inequality we further have
\begin{align*}
{1\over \lambda(B_j)}\int_{E_{j,1}}\big|b(z)-m_b(\tilde B_j)\big|d\lambda(z)
&\lesssim
{1\over \lambda(B_j)^{1\over p'}} \lambda(E_{j,1})^{1\over p'} \bigg( \int_{bD}\big|[b, \EuScript C](f_j)(z)\big|^pd\lambda(z) \bigg)^{1\over p}\\
&\lesssim
\bigg( \int_{bD}\big|[b, \EuScript C](f_j)(z)\big|^pd\lambda(z) \bigg)^{1\over p},
\end{align*}
where in the above inequalities 
we denote
$$ f_j := {\chi_{\widetilde F_{j,1}} \over \lambda(B_j)^{1\over p}}.$$

Thus, combining the above estimates we have that  
$$ 0<\delta_0  \lesssim
\bigg( \int_{bD}\big|[b, \EuScript C](f_j)(z)\big|^pd\lambda(z) \bigg)^{1\over p}.$$

Then, from \eqref{Fj1}, we obtain that
$$ {1\over4^{1\over p}}\lesssim \|f_j\|_{L^p(bD,d\lambda)}\lesssim 1.  $$
Thus, it is direct to see that $\{f_j\}_j$ is a bounded sequence in $L^p(bD,d\lambda)$ with a uniform $L^p(bD,d\lambda)$-lower bound away from zero.

Since $[b, \EuScript C]$ is compact, we obtain that the  sequence
$ \{[b, \EuScript C](f_j)\}_j $
has a convergent subsequence, denoted by
$$ \{[b, \EuScript C](f_{j_i})\}_{j_i}. $$
We denote the limit function by $g_0$, i.e.,
$$ [b, \EuScript C](f_{j_i})\to g_0 \quad{\rm in\ }  L^p(bD,d\lambda), \quad{\rm as\ } i\to\infty.$$
Moreover,  $g_{0} \neq 0$. 

After taking a further subsequence, labeled $g_{j}$, we have 
\begin{itemize} 
\item  $\lVert g_{j} \rVert _{L^{p} (bD, d \lambda )}  \simeq 1$, 
\item  $g_{j}$ are disjointly supported, 
\item and $\lVert g_{0}  -  [b, \EuScript C] g_{j} \rVert _{L^{p} (bD, d \lambda )}  < 2^{-j}$.  
\end{itemize}

\textcolor{black}{
Take $a_{j } = j ^{ \frac p {p+1}}$, so that $ \{a_{j}\} \in \ell^{p} \setminus \ell^{1}$.   It is immediate that $ \gamma = \sum_{j} a_{j} g_{j} \in 
L^{p} (bD, d \lambda )$, hence $ [b, \EuScript C] \gamma \in L^{p} (bD, d \lambda )$.  But, $ g_{0} \sum_{j} a_{j} \equiv \infty$, and yet 
 \begin{align*}
\Bigl\lVert  g_{0} \sum_{j} a_{j} \Bigr\rVert _{L^{p} (bD, d \lambda )} 
& \leq \lVert  [b, \EuScript C] \gamma  \rVert _{L^{p} (bD, d \lambda )}  
+ \sum_{j} a_{j}  \lVert   g_{0} -  [b, \EuScript C] g_{j}  \rVert _{L^{p} (bD, d \lambda )}  < \infty.
\end{align*}
This contradiction shows that $b\in {\rm VMO}(bD, d\lambda)$.
}

\smallskip

{\bf Necessary condition:}  
Recall that $\EuScript C=\EuScript C^\sharp+\EuScript R$. Since the kernel $C^\sharp(\cdot, \cdot)$ of $\EuScript C^\sharp$ is a standard kernel, by \cite[Theorem 1.1]{KL2}, $[b, \EuScript C^\sharp]$ is compact on 
$L^p(bD,d\lambda)$. Therefore, we only need to show that $[b, \EuScript R]$ is also compact on 
$L^p(bD,d\lambda)$.

From Lemma \ref {lemma 2 KL2}, for any $\xi>0$, there exists $b_\xi\in BUC(bD)$ such that 
$\|b-b_\xi\|_*<\xi$. Then
by Theorem 3.1 in \cite{KL1}, we have
\begin{align*}
\|[b, \EuScript R]f-[b_\xi, \EuScript R]f\|_{L^p(bD,d\lambda)}\leq C_p\|f\|_{L^p(bD,d\lambda)}\|b-b_\xi\|_*
<\xi C_p\|f\|_{L^p(bD,d\lambda)}
\end{align*}
Thus, to prove that $[b, \EuScript R]$ is compact on $L^p(bD,d\lambda)$, it suffices to prove that $[b_\xi, \EuScript R]$ is compact on $L^p(bD,d\lambda)$. 
By Lemma \ref {lemma 1 KL2} and \eqref {f_eta}, without loss of generality, we may assume that $b\in BUC(bD)$ and \eqref {f_eta 1} holds. 
By Lemma \ref {lemma 2 KL2}, it suffices to prove that for any fixed $\eta$ satisfying $0<\eta \ll 1$,  $[b, \EuScript R^\eta]$ is compact on $L^p(bD,d\lambda)$.

Since $R(w,z)$ is continuous on  $bD\times bD\backslash \{(z,z): z\in bD\}$, for any $f\in L^p(bD,d\lambda)$, we  see that $[b, \EuScript R^\eta]f$ is continuous on $bD$. 
\color{black}
To conclude the proof, we now argue that the image of the unit ball of $L^{p} (bD, d \lambda )$ under the commutator 
$[b, \EuScript R^\eta] $ is an equicontinuous family.  Compactness follows from the  Ascoli--Arzela theorem.  
\color{black} 

It remains to prove equicontinuity.  For any $z,w\in bD$ with ${\tt d}(w,z)<1$, we have
\begin{align*}
&[b, \EuScript R^\eta]f(z)-[b, \EuScript R^\eta]f(w)\\
&=b(z)\int_{bD}R^\eta(u,z)f(u)d\lambda(u)-\int_{bD}R^\eta(u,z)b(u)f(u)d\lambda(u)\\
&\quad -b(w)\int_{bD}R^\eta(u,w)f(u)d\lambda(u)+\int_{bD}R^\eta(u,w)b(u)f(u)d\lambda(u)\\
&=(b(z)-b(w))\int_{bD}R^\eta(u,z)f(u)d\lambda(u)+b(w)\int_{bD}\left(R^\eta(u,z)- R^\eta(u,w)\right)f(u)d\lambda(u)\\
&\quad+\int_{bD}\left(R^\eta(u,w)-R^\eta(u,z)\right)b(u)f(u)d\lambda(u)\\
&=(b(z)-b(w))\int_{bD}R^\eta(u,z)f(u)d\lambda(u)
+\int_{bD}\left(R^\eta(u,w)-R^\eta(u,z)\right)\left(b(u)-b(w)\right)f(u)d\lambda(u)\\
&=:I(z,w)+I\!I(z,w).
\end{align*}

For $I(z,w)$, by H\"older's inequality, we have
\begin{align*}
|I(z,w)|&=\left|(b(z)-b(w))\right| \left|\int_{bD}R^\eta(u,z)f(u)d\lambda(u)\right|\\
&\leq c\left|(b(z)-b(w))\right|\bigg(\int_{bD}\left|R^\eta(u,z)\right|^{p'}d\lambda(u) \bigg)^{1\over p'}\|f\|_{L^p(bD,d\lambda)}\\
&\leq {c \|f\|_{L^p(bD,d\lambda)}{\tt d}(w,z)^\epsilon},
\end{align*}
 {where the last inequality is due to the fact that $R^\eta(u,z)\in C(bD\times bD)$ and $bD$ is bounded.}

Since $b$ is bounded, if we let ${\tt d}(w,z)<{\eta\over c\cdot  c_R}$, by a discussion similar to \cite[p. 645]{KL2}, we can obtain that
\begin{align*}
|I\!I(z,w)|
&=\left| \int_{bD}\left(R^\eta(u,w)-R^\eta(u,z)\right)\left(b(u)-b(w)\right)f(u)d\lambda(u) \right|\\
 &\leq
c \|b\|_{L^\infty(bD,d\lambda)}\int_{bD\setminus B_{\eta\over c}(z)}{{\tt d}(w,z)\over {\tt d}(u,z)^{2n}} |f(u)|d\lambda(u)\\
&\leq 
c \|b\|_{L^\infty(bD,d\lambda)}\|f\|_{L^p(bD,d\lambda)}{\tt d}(w,z)
\left(\int_{bD\setminus B_{\eta\over c}(z)}{1\over {\tt d}(u,z)^{2np'}}d\lambda(u)\right)^{1\over p'}\\
&\leq
c \|b\|_{L^\infty(bD,d\lambda)}\|f\|_{L^p(bD,d\lambda)}{\tt d}(w,z)\big({\eta\over c} \big)^{-2n}\lambda(bD)^{1\over p'}\\
&\leq
c_{\eta,p}\|b\|_{L^\infty(bD,d\lambda)}\|f\|_{L^p(bD,d\lambda)}{\tt d}(w,z).
\end{align*}

Therefore, $\{[b, \EuScript R^\eta](\mathcal U)\}$ is an equicontinuous family, where $\mathcal U$ is the unit ball in $L^p(bD,d\lambda)$. 
This finishes the proof of (2) in Theorem \ref{cauchy}.
\end{proof}

{
We now provide an explanation of Remark \ref{rem1}.
\begin{proposition}\label{rem1prop}
Theorem \ref{cauchy} holds for the $L^p$, BMO and VMO spaces
with respect to any measure $d\omega$ on $bD$ of the form $d\omega = \omega d\sigma$, where 
the density $\omega$ is a strictly positive continuous function on $bD$.
\end{proposition}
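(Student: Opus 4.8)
The plan is to observe that the measure $d\omega$ is comparable to the Leray--Levi measure $d\lambda$, and then to transfer Theorem \ref{cauchy} essentially verbatim. First, since $\omega$ is continuous and strictly positive on the compact set $bD$, there are positive constants $c_\omega,\tilde c_\omega$ with $c_\omega\le\omega(w)\le\tilde c_\omega$ for all $w\in bD$; combining this with the fact recalled above that $d\lambda=\Lambda\,d\sigma$ with $\Lambda$ continuous and $c\le\Lambda\le\tilde c$, we get $d\omega=(\omega/\Lambda)\,d\lambda$ with $\omega/\Lambda$ bounded above and below, so that there is a constant $C\ge1$ with
\begin{equation*}
C^{-1}\,\lambda(E)\le\omega(E)\le C\,\lambda(E)\qquad\text{for every Borel set }E\subset bD.
\end{equation*}
In particular, by \eqref{lambdab}, $(bD,{\tt d},d\omega)$ is again a space of homogeneous type with $\omega(B_r(w))\approx r^{2n}$ for $0<r\le1$, and $d\omega$ is doubling and satisfies the second condition of Lemma \ref{measure lambda}, so the whole machinery (maximal inequality, John--Nirenberg inequality, sharp inequality, Lemmas \ref{lemma 1 KL2}--\ref{lemma 2 KL2}) is available for $d\omega$ as well.

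Next I would record the two soft consequences of this comparability. (i) $L^p(bD,d\omega)=L^p(bD,d\lambda)$ as vector spaces with $\|f\|_{L^p(bD,d\omega)}\approx\|f\|_{L^p(bD,d\lambda)}$ for every $p\in(1,\infty)$; hence the identity is a Banach space isomorphism between the two, and since $\EuScript C$ is one and the same linear operator on functions on $bD$ (only the ambient norm changes), $[b,\EuScript C]$ is bounded, respectively compact, on $L^p(bD,d\omega)$ if and only if it is bounded, respectively compact, on $L^p(bD,d\lambda)$ --- compactness being invariant under passage to an equivalent norm, because bounded sets and relatively compact sets are unchanged. (ii) ${\rm BMO}(bD,d\omega)={\rm BMO}(bD,d\lambda)$ and ${\rm VMO}(bD,d\omega)={\rm VMO}(bD,d\lambda)$, with equivalent norms. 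The one point needing a word here is that the corrector $b_B$ depends on the measure; this is handled by the standard reformulation
\begin{equation*}
\inf_{a\in\mathbb C}\frac1{\mu(B)}\int_B|b-a|\,d\mu\ \le\ \frac1{\mu(B)}\int_B|b-b_B|\,d\mu\ \le\ 2\inf_{a\in\mathbb C}\frac1{\mu(B)}\int_B|b-a|\,d\mu ,
\end{equation*}
valid for any measure $\mu$, so that the mean oscillation may be computed with an arbitrary constant $a$ in place of $b_B$ up to a factor $2$; since $\frac1{\omega(B)}\int_B|b-a|\,d\omega\approx\frac1{\lambda(B)}\int_B|b-a|\,d\lambda$ uniformly in $B$ and $a$ by the comparability above, taking the infimum over $a$ and then the supremum (respectively the limit as $r_B\to0$) over $B$ yields the equivalence of the BMO norms and of the VMO conditions \eqref{vmpc}.

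Finally I would assemble the chain of equivalences: $b\in{\rm BMO}(bD,d\omega)\iff b\in{\rm BMO}(bD,d\lambda)\iff[b,\EuScript C]$ is bounded on $L^p(bD,d\lambda)\iff[b,\EuScript C]$ is bounded on $L^p(bD,d\omega)$, where the middle equivalence is part $(1)$ of Theorem \ref{cauchy} and the outer two are (ii) and (i) above; the statement for ${\rm VMO}$ and compactness is obtained in exactly the same way from part $(2)$ of Theorem \ref{cauchy}. This proves Proposition \ref{rem1prop}. Since every step is a comparability argument, there is essentially no obstacle here; the only place that demands a little care is the measure-dependence of the averaging constant in the definitions of ${\rm BMO}$ and ${\rm VMO}$, which is why I would pass through the inf-over-constants reformulation rather than comparing $b_B$ computed against the two measures directly.
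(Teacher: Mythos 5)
Your proof is correct and rests on exactly the same idea as the paper's: since $\omega$ is a strictly positive continuous function on the compact set $bD$ it is bounded above and below, so $d\omega$ and $d\lambda$ are mutually comparable and the corresponding $L^p$, BMO and VMO spaces coincide with equivalent norms. The only difference is one of presentation: the paper, having established the comparability, simply says ``by repeating the proof of Theorem~\ref{cauchy}'' the result carries over, whereas you perform the transfer directly at the level of the conclusion (noting that $\EuScript C$ is the same linear operator regardless of the ambient norm, that boundedness and compactness are invariant under passage to an equivalent norm, and that the measure-dependence of $b_B$ is handled by the inf-over-constants reformulation of mean oscillation). Your formulation is cleaner and makes explicit the one small point the paper glosses over, but the underlying argument is the same.
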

\begin{proof}
We recall that the Leray--Levi measure $d\lambda$ played an important role in \cite{LS} since they needed to handle the adjoint of the Cauchy--Leray transform, which corresponds to $d\lambda$ but not the induced Lebesgue measure $d\sigma$ on $bD$. However, this distinction will not matter in the final statements of the $L^p(bD)$-boundedness of $\EuScript C$, since $d\lambda$ and $d\sigma$ are equivalent in the following sense (stated in Section 2.1 but we recall again here):
$$ d\lambda(w) =\Lambda(w)d\sigma(w), $$
where $\Lambda(w)$ is a continuous function on $bD$ such that $c\leq \Lambda(w)\leq \tilde c $ for all $w\in bD$ with $c,\tilde c$ two positive absolute constants. 

Now suppose $d\omega$ is a measure on $bD$ of the form $d\omega = \omega d\sigma$, where 
the density $\omega$ is a strictly positive continuous function on $bD$. Since $bD$ is bounded, we see that $c\leq \omega(w)\leq \tilde c $ for all $w\in bD$ with $c,\tilde c$ two positive absolute constants. Hence, $d\omega$ and $d\lambda$ are equivalent.  As a consequence, we see that 
the spaces $L^p(bD,d\omega)$, BMO$(bD,d\omega)$ and VMO$(bD,d\omega)$ are equivalent to 
$L^p(bD,d\lambda)$, BMO$(bD,d\lambda)$ and VMO$(bD,d\lambda)$.

Next, we recall that the proof of Theorem \ref{cauchy} depends only on the $L^p(bD)$ boundedness of the essential part $\EuScript C^\sharp$ and the remainder $\EuScript R$, as well as the pointwise  estimate of the kernels of $\EuScript C^\sharp$ and $\EuScript R$.  Hence, by repeating the proof of Theorem \ref{cauchy}, we see that Theorem \ref{cauchy} holds for the
$L^p(bD,d\omega)$, BMO$(bD,d\omega)$ and VMO$(bD,d\omega)$ spaces.
\end{proof}

We now provide an explanation to Remark \ref{rem1 family}.

\begin{proposition}\label{rem1prop epsilon}
Suppose $b\in {\rm BMO} (bD,d\lambda)$ and let $\{\EuScript C_\epsilon\}_\epsilon$ be a family of Cauchy integrals parameterized by $0<\epsilon< \epsilon_0$.  Then 
\begin{align}\label{upper bound epsilon}
\|[b,\EuScript C_\epsilon]\|_{L^p(bD, d\lambda)\rightarrow L^p(bD, d\lambda)} \leq C_\epsilon \|b\|_{{\rm BMO} (bD,d\lambda)},
\end{align}
where $C_\epsilon$ can go to $\infty$ as $\epsilon\to0$.
\end{proposition}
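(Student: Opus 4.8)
The plan is to apply the proof of the necessity half of part (1) of Theorem \ref{cauchy} to the operators $\EuScript C_\epsilon$, $\EuScript C^\sharp_\epsilon$, $\EuScript R_\epsilon$ for each fixed $\epsilon\in(0,\epsilon_0)$, and then to record how the constant produced by that proof depends on $\epsilon$. The point will be that the $\epsilon$-dependence enters only through a handful of quantitative features of the decomposition $\EuScript C_\epsilon=\EuScript C^\sharp_\epsilon+\EuScript R_\epsilon$, each of which is finite for every fixed $\epsilon$ but, by \cite{LS}, need not be uniform in $\epsilon$.

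Fix $\epsilon\in(0,\epsilon_0)$. By \cite{LS}, the essential part $\EuScript C^\sharp_\epsilon$, with kernel $C^\sharp_\epsilon(w,z)=g_\epsilon(w,z)^{-n}$, is a Calder\'on--Zygmund operator: its kernel satisfies \eqref{gwz} with some constant $A_1(\epsilon)$ and $\EuScript C^\sharp_\epsilon$ is bounded on $L^p(bD,d\lambda)$; and the remainder $\EuScript R_\epsilon$ has kernel satisfying \eqref{cr} with some constant $C_R(\epsilon)$ and is bounded on $L^q(bD,d\lambda)$ for every $1<q<\infty$. Thus all the hypotheses of Theorem \ref{cauchy}(1) are met, and the proof of its necessity half applies verbatim, giving
\begin{align*}
\|[b,\EuScript C_\epsilon]\|_{L^p(bD,d\lambda)\to L^p(bD,d\lambda)}\leq C_\epsilon\,\|b\|_{{\rm BMO}(bD,d\lambda)}.
\end{align*}
Inspecting that proof, $C_\epsilon$ depends only on $n$, $p$, and the geometry of $bD$ (through \eqref{lambdab}, the doubling and John--Nirenberg constants, and the uniform bound for $r\log_2(\overline C/r)$), together with: $A_1(\epsilon)$ and $\|\EuScript C^\sharp_\epsilon\|_{L^p\to L^p}$, which bound $\|[b,\EuScript C^\sharp_\epsilon]\|$ via \cite[Theorem 3.1]{KL1}; and $C_R(\epsilon)$ together with $\|\EuScript R_\epsilon\|_{L^q\to L^q}$ for the finitely many exponents $q$ used in the estimates of the terms $I\!I$, $I\!I\!I$, $\mathfrak A_1$, $\mathfrak A_2$, which bound $\|[b,\EuScript R_\epsilon]\|$. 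Tracking these through the argument exhibits $C_\epsilon$ as an explicit, increasing function of $A_1(\epsilon)$, $\|\EuScript C^\sharp_\epsilon\|_{L^p\to L^p}$, $C_R(\epsilon)$ and $\max_q\|\EuScript R_\epsilon\|_{L^q\to L^q}$, all of which are finite for fixed $\epsilon$.

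It remains to note that $C_\epsilon$ need not stay bounded as $\epsilon\to0$. Indeed, \cite{LS} shows that $\|\EuScript R_\epsilon\|_{L^p(bD,d\lambda)\to L^p(bD,d\lambda)}\to\infty$ as $\epsilon\to0$; since this norm is one of the quantities entering $C_\epsilon$ monotonically (it appears, for example, as the constant in the bound $\|\EuScript R_\epsilon f\|_{L^q(bD,d\lambda)}\le c\,\|f\|_{L^q(bD,d\lambda)}$ used repeatedly in the estimate of $[b,\EuScript R_\epsilon]$), we conclude that $C_\epsilon\to\infty$ as $\epsilon\to0$, which is precisely \eqref{upper bound epsilon}.

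The one point that needs care is the bookkeeping in the second step: one must check that the necessity proof of Theorem \ref{cauchy}(1) invokes $A_1(\epsilon)$, $C_R(\epsilon)$ and the operator norms $\|\EuScript C^\sharp_\epsilon\|$, $\|\EuScript R_\epsilon\|$ only finitely many times and only in a monotone fashion, so that $C_\epsilon$ is genuinely an increasing function of them, and that no step silently relies on some feature of $\EuScript C_\epsilon$ that is uniform in $\epsilon$ and would conceal the blow-up. This is routine but requires rereading that proof with the parameter $\epsilon$ retained rather than suppressed.
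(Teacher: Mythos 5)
Your proof is correct and follows essentially the same route as the paper's: decompose $\EuScript C_\epsilon = \EuScript C^\sharp_\epsilon + \EuScript R_\epsilon$, rerun the necessity half of Theorem~\ref{cauchy}(1) keeping track of $\epsilon$, and attribute the possible blow-up of the resulting constant to the fact from \cite{LS} that $\|\EuScript R_\epsilon\|_{L^p\to L^p}$ can tend to $\infty$ as $\epsilon\to0$. The only difference in emphasis is that the paper unwinds the Lanzani--Stein construction of $\EuScript R_\epsilon$ (the Cauchy--Fantappi\'e integral, the forms $\alpha_0,\alpha_j$, and $c_\epsilon=\sup|\nabla\tau^\epsilon_{jk}|$) to make explicit \emph{why} the remainder's norm depends on $\epsilon$, and also asserts that the $[b,\EuScript C^\sharp_\epsilon]$ contribution is uniform in $\epsilon$, whereas you keep $A_1(\epsilon)$ and cite \cite{LS} as a black box — a difference of detail, not of argument.
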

\begin{proof}
Recall that  in \cite{LS} it was pointed out that, for a family of Cauchy integrals $\{\EuScript C_\epsilon\}_\epsilon$ parameterized by $0<\epsilon< \epsilon_0$, then by splitting $\EuScript C_\epsilon =  \EuScript C^\sharp_\epsilon+ \EuScript R_\epsilon$, the operator norm $\|\EuScript R_\epsilon\|_{L^p(bD,d\lambda)\to L^p(bD,d\lambda)} $ can go to $\infty$ as $\epsilon\to0$.

To be more precise, we first recall that as pointed out in \cite{LS},
$$ c_\epsilon:= \sup_{\substack{w\in bD\\ 1\leq j,k\leq n} }| \nabla \tau_{j,k}^\epsilon(w) | $$
will in general tend to $\infty$ as $\epsilon\to0$, where $\tau_{j,k}^\epsilon(w)$ is the $n\times n$-matrix of $C^1$ function as stated in Subsection 2.1. Next, we recall the Cauchy--Fantappi\'e integral. Suppose $0<\epsilon< \epsilon_0$. Consider 
$$ G(w,z):= \chi(w,z) \ \Big[ \partial \rho(w) -{1\over2}\sum_{j,k} \tau_{j,k}^\epsilon(w) (w_j-z_j) dw_k 
\Big] + (1-\chi) \sum_k (\bar w_k- \bar z_k) dw_k, $$
where
$\chi(w,z)$ is a $C^\infty$-cutoff function
 \[
    \chi(w,z)=\left\{
                \begin{array}{ll}
                  1,\quad {\rm when}\ |w-z|\leq {\mu\over2};\\[6pt]
                  0,\quad {\rm when}\ |w-z|\geq {\mu}
                                  \end{array}
              \right.
  \] 
 (note that this 1-form $G$ was introduced in (3.1) in \cite{LS}). 
Let $g(w,z) = \langle G(w,z),w-z \rangle$. Then the Cauchy--Fantappi\'e integral ((3.2) in \cite{LS}) is defined as 
$$ \mathbf C^1(f)(z) :=  \int_{w\in bD} C^{(1)}(w,z) f(w),\quad z\in D,   $$
where
$$ C^{(1)}(w,z):= {1\over (2\pi i)^n} j^* \bigg( { G\wedge (\bar\partial G)^{n-1}(w,z) \over g(w,z)^n }  \bigg). $$
Another correction operator $\mathbf C^2$ is needed since $\mathbf C^1 (f)$ is not holomorphic for general $f$ (for details we refer to \cite[Subsection 3.2]{LS}). The kernel  $C^{(2)}(w,z)$ of $\mathbf C^2$ is bounded on $bD\times \bar D$ with 
$$\sup_{(w,z)\in bD\times\bar D} |C^{(2)}(w,z)|\lesssim1. $$
Then the Cauchy integral $\mathbf C = \mathbf C^1 +\mathbf C^2$.

Then the ``essential'' part is defined $\mathbf C^\sharp$ just by replacing ${1\over (2\pi i)^n} j^* \big(  G\wedge (\bar\partial G)^{n-1}(w,z)\big)$ by the Leray--Levi measure $d\lambda = {1\over (2\pi i)^n} j^* \big(  \partial \rho\wedge (\bar\partial \partial\rho)^{n-1}(w,z)\big)$, i.e.,
$$  \mathbf C^\sharp (f)(z) :=  \int_{w\in bD} { f(w)\over g(w,z)^n} f(w)d\lambda(w),\quad z\in D. $$
Hence, $\mathbf C$ can be written as 
$$\mathbf C = \mathbf C^\sharp +\mathbf R,$$
 where the remainder $\mathbf R$ is defined by $\mathbf R = \mathbf C^1 -\mathbf C^\sharp +\mathbf C^2$ with the explicit expression as 
$$\mathbf R(f)(z) =\int_{bD} { j^*\big( \alpha_0(w,z) + \sum_j\alpha_j(w,z) (w_j-z_j)  \big)  \over g(w,z)^n}  f(w)   +\mathbf C^2(f)(z),$$
where $\alpha_0,\alpha_j$ are $(2n-1)$-form with coefficient  on $w\in bD$ and smooth on $z\in \bar D$. Then for the fixed $\epsilon$ at the beginning, the remainder $ \EuScript R_\epsilon$ is defined as
$$ \EuScript R_\epsilon(f)(z) = \mathbf R(f)(z)\bigg|_{z\in bD}.$$
From the definition of $\mathbf R$ we see that restriction of $\mathbf C^2$ on $bD$ is uniformly bounded on $L^p(bD, d\lambda)$. However, for the first term, the upper bound of the coefficients $\alpha_0$ and $\alpha_j$ involves $ |\nabla \tau_{j,k}^\epsilon(w)|$. Hence, the 
operator norm $\|\EuScript R_\epsilon\|_{L^p(bD,d\lambda)\to L^p(bD,d\lambda)} $ is related to $c_\epsilon$.

 We now point out that, this fact will affect the $L^p(bD,d\lambda)\to L^p(bD,d\lambda)$ norm of the commutator $[b, \EuScript C_\epsilon]$ as well.  Recalling the proof of the necessity of $(1)$ in Theorem \ref{cauchy}, suppose $b\in $ BMO$(bD,d\lambda)$, then 
by writing $[b, \EuScript C_\epsilon] =[b, \EuScript C^\sharp_\epsilon] +[b, \EuScript R_\epsilon]$, 
it suffices to verify the $L^p(bD,d\lambda) \to L^p(bD,d\lambda)$ norms of 
$[b, \EuScript C^\sharp_\epsilon]$ and $[b, \EuScript R_\epsilon]$. Note that the first one is bounded by a constant multiplying $\|b\|_{{\rm BMO}(bD,d\lambda)}$ where the constant is independent of $\epsilon$. For the second one, by repeating the proof of 
\ref{[b,R] bounded} we see that
$$
\|[b,\EuScript R_\epsilon]\|_{L^p(bD, d\lambda)\rightarrow L^p(bD, d\lambda)} \leq C_\epsilon \|b\|_{{\rm BMO} (bD,d\lambda)},
$$
where $C_\epsilon$ is with respect to $c_\epsilon$, and hence it can go to $\infty$ as $\epsilon\to0$.
Hence, \eqref{upper bound epsilon} holds.
\end{proof}

}

\subsection{A remark on the Commutator $[b, \EuScript C^\sharp]$ }

We also point out that from the proof of the  main result above, we can also deduce that
the commutator of the essential part $\EuScript C^\sharp$ of $\EuScript C$ can also characterise the BMO space
on the boundary $bD$.

To be more precise, we have the following.
\begin{theorem}\label{csharp-iff}
Suppose $D\subset \mathbb C^n$, $n\geq 2$, is  a bounded domain whose boundary is of class $C^2$ and is strongly pseudoconvex,   $b\in L^1(bD, d\lambda)$. Then for $1<p<\infty$, 

{\rm (1)} the function $b\in{\rm BMO}(bD,d\lambda)$ if and only if the commutator $[b, \EuScript C^\sharp]$ is bounded on  $L^p(bD, d\lambda)$,

{\rm(2)} the function $b\in{\rm VMO}(bD,d\lambda)$ if and only if the commutator $[b, \EuScript C^\sharp]$ is compact on  $L^p(bD, d\lambda)$.
\end{theorem}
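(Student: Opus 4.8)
The plan is to observe that Theorem \ref{csharp-iff} is a corollary of (the proof of) Theorem \ref{cauchy}: one repeats the four constituent arguments with $\EuScript C$ replaced by $\EuScript C^\sharp$ and with the remainder term $\EuScript R$ simply deleted throughout. Since $\EuScript C^\sharp$ has a kernel $C^\sharp(w,z)$ that is a standard Calder\'on--Zygmund kernel on $bD\times bD$ (conditions \eqref{gwz}) and $\EuScript C^\sharp$ is bounded on $L^p(bD,d\lambda)$ for $1<p<\infty$ by Lemma \ref{csharp-lp}, the situation is in fact \emph{cleaner} than for $\EuScript C$, because we never have to cope with the non-standard kernel $R(w,z)$.

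First I would dispatch the two ``easy'' implications. For part (1), the implication ``$b\in{\rm BMO}(bD,d\lambda)\Rightarrow[b,\EuScript C^\sharp]$ bounded on $L^p(bD,d\lambda)$'' is immediate from \cite[Theorem 3.1]{KL1}, since $\EuScript C^\sharp$ is a Calder\'on--Zygmund operator; indeed this is exactly the estimate for $[b,\EuScript C^\sharp]$ already used inside the proof of the necessity part of (1) in Theorem \ref{cauchy}. For part (2), the implication ``$b\in{\rm VMO}(bD,d\lambda)\Rightarrow[b,\EuScript C^\sharp]$ compact on $L^p(bD,d\lambda)$'' follows from \cite[Theorem 1.1]{KL2}, again because $C^\sharp(\cdot,\cdot)$ is a standard kernel and $(bD,{\tt d},d\lambda)$ is a space of homogeneous type satisfying the hypotheses needed there (Lemma \ref{measure lambda}).

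For the converse in (1), namely ``$[b,\EuScript C^\sharp]$ bounded $\Rightarrow b\in{\rm BMO}(bD,d\lambda)$'', I would run verbatim the sufficiency argument of (1) in Theorem \ref{cauchy}. The only ingredient of that argument concerning the operator was the non-degeneracy dichotomy: for every ball $B=B_r(z_0)\subset bD$ with $r<\gamma_0$ there is a twin ball $\tilde B=B_r(w_0)$ with $A_3 r\le{\tt d}(w_0,z_0)\le(A_3+1)r$ such that either the real part $C^\sharp_1(w,z)$ or the imaginary part $C^\sharp_2(w,z)$ is of constant sign on $\tilde B\times B$ and bounded below there by a constant multiple of ${\tt d}(w,z)^{-2n}$. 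This property was derived purely from \eqref{gwz}, \eqref{gd}, \eqref{metric d}, so it is available here, and in the present setting we do not even need to absorb $R_1(w,z)$ as in \eqref{kernel lower bound C1}: we may use $C^\sharp_1$ directly. One then introduces the median $m_b(\tilde B)$, the sets $E_1,E_2\subset B$ and $F_1,F_2\subset\tilde B$, and the pointwise inequalities \eqref{f1f2-1}--\eqref{bx-by-1}, and estimates
\[
\frac{1}{\lambda(B)}\int_{E_1}\big|b(z)-m_b(\tilde B)\big|\,d\lambda(z)
\lesssim \frac{1}{\lambda(B)}\int_{E_1}\Big|\int_{F_1}C^\sharp(w,z)\big(b(z)-b(w)\big)\,d\lambda(w)\Big|\,d\lambda(z)
=\frac{1}{\lambda(B)}\int_{E_1}\big|[b,\EuScript C^\sharp](\chi_{F_1})(z)\big|\,d\lambda(z),
\]
followed by H\"older's inequality and the boundedness hypothesis; the large-radius case $r\ge\gamma_0$ is handled by the trivial $L^1$ bound exactly as in Case 1 there. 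Similarly, for the converse in (2), ``$[b,\EuScript C^\sharp]$ compact $\Rightarrow b\in{\rm VMO}(bD,d\lambda)$'', I would reproduce the sufficiency argument of (2) in Theorem \ref{cauchy}: first $[b,\EuScript C^\sharp]$ bounded gives $b\in{\rm BMO}(bD,d\lambda)$ by part (1); then, assuming $b\notin{\rm VMO}$, one selects balls $B_j$ with geometrically decaying radii witnessing the defect \eqref{delta0}, builds the normalized, disjointly supported test functions $f_j=\chi_{\widetilde F_{j,1}}/\lambda(B_j)^{1/p}$ with $\|f_j\|_{L^p}\simeq1$ from below and $\delta_0\lesssim\|[b,\EuScript C^\sharp]f_j\|_{L^p}$, passes to a convergent subsequence $[b,\EuScript C^\sharp]f_j\to g_0\ne0$, and derives the contradiction from the fact that no bounded operator on $\ell^p(\mathbb N)$ sends every standard basis vector to one and the same nonzero element.

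I do not expect a genuine obstacle here; the content is entirely in checking that each step of the proof of Theorem \ref{cauchy} uses the remainder $\EuScript R$ only through estimates that are subsumed by the corresponding (sharper) estimates for $\EuScript C^\sharp$. The one point deserving a sentence of care is that the lower kernel bound used for the ``only if'' directions must be stated for $C^\sharp_1$ (or $C^\sharp_2$) alone rather than for $C^\sharp_1+R_1$, which is precisely property $\mathfrak a)$ (resp.\ $\mathfrak b)$) as stated in the proof of (1) of Theorem \ref{cauchy}; no new non-degeneracy input is required.
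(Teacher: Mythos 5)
Your proposal is correct and matches the paper's own proof: both simply rerun the four implications of Theorem \ref{cauchy} with $\EuScript R$ deleted, citing \cite[Theorem 3.1]{KL1} and \cite[Theorem 1.1]{KL2} for the forward directions and the $\mathfrak a)$/$\mathfrak b)$ dichotomy (now applied to $C^\sharp_1$, $C^\sharp_2$ directly, without the $R_1$ absorption in \eqref{kernel lower bound C1}) together with the median-value and $\ell^p$ contradiction arguments for the converses. The paper presents this as a sketch in precisely the same spirit, noting it is simpler because $\EuScript C^\sharp$ is already a Calder\'on--Zygmund operator.
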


\begin{proof}

We point out that the proof of Theorem \ref{csharp-iff} follows from the proof of Theorem \ref {cauchy}, and in fact, it is simpler, since the operator $\EuScript C^\sharp$ is a Calder\'on--Zygmund operator. We only sketch the proof here.

Proof of (1):

It is clear that the necessary condition follows from the necessary part in the proof of Theorem \ref{cauchy} above.

The sufficient condition follows from the proof of the  sufficiency part of Theorem \ref{cauchy}  by mainly using the fact that at least one of the arguments $\mathfrak a)$ and
$\mathfrak b)$ holds (where $\mathfrak a)$ and
$\mathfrak b)$ are stated  in the proof of the  sufficiency part of Theorem \ref{cauchy} above)

Proof of (2): the proof follows from the proof of (2) of Theorem \ref{cauchy}.

This finishes the proof of Theorem \ref{csharp-iff}.
\end{proof}

In the end, we point out that similar to Remark \ref{rem1}, we see that Theorem \ref{csharp-iff} holds for the $L^p$, BMO and VMO spaces
with respect to any measure $d\omega$ on $bD$ of the form $d\omega = \omega d\sigma$, where 
the density $\omega$ is a strictly positive continuous function on $bD$.

\color{black}

\section{The Cauchy-Leray integral for domains in $\C^n$ with minimal smoothness}
\setcounter{equation}{0}

In this section, we focus on the bounded domain $D\subset \mathbb C^n$ which is  strongly $\mathbb C$-linearly convex and whose boundary satisfies the minimal regularity condition of class $C^{1,1}$.

\subsection{Preliminaries}
We now recall the notation from  \cite{LS2014}.
Suppose $D$ is a bounded domain in $\C^n$ with defining function $\rho$
satisfying 

1) $D$ is of class $C^{1,1}$, i.e., the first derivatives of its defining function $\rho$ are Lipschitz, and $|\nabla \rho(w)|>0$ whenever $w\in\{w:\rho(w)=0\}=bD$;

2) $D$ is strongly $\C$-linearly convex, i.e., $D$ is a bounded domain of $C^1$, and at any boundary point it satisfies either of the following two equivalent conditions
\begin{align*}
 |\Delta(w,z)|&\geq c|w-z|^2, \\
 d_E\big(z,w+T_w^{\mathbb C}\big)&\geq \tilde c|w-z|^2, 
\end{align*}
for some $c,\tilde c>0$, where
\begin{align}\label{delta}
\Delta(w,z)=\langle\partial \rho(w), w-z\rangle,
\end{align}
and $d_E(z,w+T_w^{\mathbb C})$ denotes the Euclidean distance from $z$ to the affine subspace $w+T_w^{\mathbb C}$. Note that
$T_w^{\mathbb C}:=\{v:\langle\partial \rho(w),v\rangle=0\}$ is the complex tangent space referred to the origin, $w+T_w^{\mathbb C}$ is its geometric realization as an affine space tangent to $bD$ at $w$.

On $bD$ there is a quasi-distance ${\tt d}$, which is defined as 
$$ {\tt d}(w,z)= |\Delta(w,z)|^{1\over 2} = |\langle \partial\rho,w-z \rangle|^{1\over2},\quad w,z\in bD. $$

The Leray--Levi measure $d\lambda$ on $bD$ introduced in \cite{LS2014} is defined as that in Section 2. 
According to \cite[Proposition 3.4]{LS2014},  $d\lambda$ is also equivalent to the induced Lebesgue measure $d\sigma$ on $bD$ in the following sense:
$$ d\lambda(w) =\tilde\Lambda(w)d\sigma(w)\quad {\rm \ for\ }\sigma {\rm\ a.e.\ } w\in bD,$$
and there are two strictly positive constants $c_1$ and $c_2$ so that
$$ c_1\leq \tilde\Lambda(w)\leq c_2 \quad {\rm \ for\ }\sigma {\rm\ a.e.\ } w\in bD.$$

We also denote by $ B_r(w) =\{ z\in bD:\ {\tt d}(w,z)<r \}$  the  boundary balls determined via the quasidistance ${\tt d }$.
By \cite[Proposition 3.5]{LS2014}, we also have
\begin{align}\label{lambdab-1}
\lambda\big(B_r(w) \big)\approx r^{2n},\quad 0<r\leq 1.
\end{align}

The Cauchy--Leray integral of a suitable function $f$ on $bD$, denoted ${\bf C}(f)$, is formally defined by
$$  {\bf C}(f)(z) =\int_{bD} {f(w)\over \Delta(w,z)^n}d\lambda(w),\quad z\in D. $$

When restricting $z$ to the boundary $bD$, we have the Cauchy--Leray transform $f\mapsto \mathcal C(f)$, defined as
$$  {\mathcal C}(f)(z) =\int_{bD} {f(w)\over \Delta(w,z)^n}d\lambda(w),\quad z\in bD, $$
where the function $f$ satisfies the H\"older-like condition
$$ |f(w_1)-f(w_2)|\lesssim {\tt d}(w_1,w_2)^\alpha,\quad 
 w_1,w_2\in bD, $$
  for some $0<\alpha\leq 1$.

The main result in Lanzani--Stein \cite{LS2014} is as follows:

\begin{theorem}[Lanzani--Stein \cite{LS2014}, Theorem 5.1]\label{thm2.1}
The Cauchy--Leray transform $f\mapsto \mathcal C(f)$, initially defined for functions $f$ that satisfy the H\"older-like condition for some $\alpha$, extends to a bounded linear operator on $L^p(bD)$ for $1<p<\infty$.
\end{theorem}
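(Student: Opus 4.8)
The plan is to prove Theorem~\ref{thm2.1} by the Calder\'on--Zygmund method on the space of homogeneous type $(bD,{\tt d},d\lambda)$, following the scheme of \cite{LS2014}. First I would verify that, as an operator on $L^p(bD,d\lambda)$, the Cauchy--Leray transform $\mathcal C$ has Schwartz kernel $K(w,z)=\Delta(w,z)^{-n}$ which is a standard Calder\'on--Zygmund kernel of homogeneous dimension $2n$ (of the type described in a)--c) of \eqref{gwz}) relative to $({\tt d},d\lambda)$. Then I would check the cancellation hypotheses of a $T(1)$ theorem adapted to spaces of homogeneous type to obtain $L^2(bD,d\lambda)$ boundedness. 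Finally I would promote this to $L^p(bD)$ for $1<p<\infty$ by the Calder\'on--Zygmund decomposition on $(bD,{\tt d},d\lambda)$, Marcinkiewicz interpolation, and duality, and conclude by a density argument extending the bound from the H\"older-like functions to all of $L^p(bD)$.

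\emph{Kernel estimates.} Strong $\mathbb C$-linear convexity gives $|\Delta(w,z)|\geq c|w-z|^2$, hence $|w-z|\lesssim{\tt d}(w,z)$ and, together with $|\Delta(w,z)|\approx{\tt d}(w,z)^2$ and the volume estimate \eqref{lambdab-1}, the size bound $|K(w,z)|\lesssim{\tt d}(w,z)^{-2n}$ is immediate. For the smoothness estimates one telescopes the difference of $n$-th powers of $\Delta^{-1}$; the point is that the increment of $\Delta$ carries an \emph{extra} factor of ${\tt d}(w,z)$. Indeed, writing $\Delta(w,z)-\Delta(w,z')=\langle\partial\rho(w)-\partial\rho(z),\,z-z'\rangle-\Delta(z,z')$ and using the Lipschitz continuity of $\partial\rho$ (this is where the minimal $C^{1,1}$ hypothesis enters) one gets $|\Delta(w,z)-\Delta(w,z')|\lesssim{\tt d}(z,z')\bigl({\tt d}(w,z)+{\tt d}(z,z')\bigr)$, which yields condition c) with the exponent $2n+1$ rather than the naive $2n+2$. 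Similarly $\Delta(w,z)-\Delta(w',z)=\langle\partial\rho(w)-\partial\rho(w'),\,w-z\rangle-\Delta(w',w)$, and since $|\Delta(w',w)|={\tt d}(w',w)^2$ and $|w-z|\lesssim{\tt d}(w,z)$, one obtains $|\Delta(w,z)-\Delta(w',z)|\lesssim{\tt d}(w,w'){\tt d}(w,z)$ in the regime ${\tt d}(w,w')\lesssim{\tt d}(w,z)$, which gives condition b). (By contrast, in the merely strongly pseudoconvex $C^2$ setting of Section~2 the size estimate fails because the complex tangent plane may meet $\bar D$, which forces the decomposition $\EuScript C=\EuScript C^\sharp+\EuScript R$.)

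\emph{The $T(1)$ step.} With the Calder\'on--Zygmund kernel in hand, $L^2(bD,d\lambda)$ boundedness follows from a $T(1)$ theorem on $(bD,{\tt d},d\lambda)$ once one verifies a weak boundedness property and that $\mathcal C(1),\mathcal C^{*}(1)\in{\rm BMO}(bD,d\lambda)$. The weak boundedness property is read off the kernel structure. For $\mathcal C(1)$: since constants are holomorphic, the interior Cauchy--Leray integral reproduces them, $\mathbf C(1)\equiv1$ on $D$; then, using Stokes' theorem and the closedness of the Leray form $\Delta(\cdot,z)^{-n}d\lambda$ away from the diagonal, one shows that the truncated integrals $\int_{{\tt d}(w,z)>\varepsilon}\Delta(w,z)^{-n}d\lambda(w)$ are uniformly bounded, i.e.\ $\mathcal C(1)\in L^\infty(bD)\subset{\rm BMO}(bD,d\lambda)$. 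The term $\mathcal C^{*}(1)$ is the delicate one: the adjoint of $\mathcal C$ with respect to $d\lambda$ has kernel $\overline{\Delta(w,z)}^{-n}$, which is no longer of Cauchy--Leray type and does not reproduce holomorphic functions, so one must instead exploit the precise algebraic relation between $\Delta(w,z)$ and $\Delta(z,w)$ --- available exactly because the chosen measure is the Leray--Levi measure $d\lambda$ and not the surface measure $d\sigma$ --- to reduce $\mathcal C^{*}(1)$ to a controllable expression, again via Stokes and the structure of the Leray form.

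\emph{From $L^2$ to $L^p$, and the main obstacle.} Granted $L^2(bD,d\lambda)$ boundedness together with the kernel conditions above, $\mathcal C$ is of weak type $(1,1)$ on $(bD,{\tt d},d\lambda)$ by a standard Calder\'on--Zygmund decomposition, hence bounded on $L^p(bD,d\lambda)$ for $1<p\leq2$ by Marcinkiewicz interpolation; the range $2\leq p<\infty$ follows by duality, since $\mathcal C^{*}$ obeys estimates of the same type. Finally, since $d\lambda$ and $d\sigma$ are comparable, this gives boundedness on $L^p(bD)$. I expect the main obstacle to be the verification $\mathcal C^{*}(1)\in{\rm BMO}(bD,d\lambda)$ in the $T(1)$ step: unlike for $\mathcal C(1)$ there is no holomorphic reproducing identity to invoke, so the required cancellation has to be extracted from the fine geometry of $\Delta$ and the Leray--Levi form; a secondary, still substantial, technical difficulty is carrying out the smoothness estimates of the kernel using only the Lipschitz regularity of $\partial\rho$ afforded by the minimal $C^{1,1}$ hypothesis.
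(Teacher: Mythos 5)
The paper does not prove this theorem; it quotes it from Lanzani--Stein \cite{LS2014} and records only that their method is to establish Calder\'on--Zygmund kernel estimates for $K(w,z)=\Delta(w,z)^{-n}$ relative to $({\tt d},d\lambda)$ and then apply a $T(1)$ theorem, followed by standard Calder\'on--Zygmund theory for the full range $1<p<\infty$. Your proposal reconstructs precisely that strategy, and the kernel computations are correct in substance (one algebraic slip: the identity should read $\Delta(w,z)-\Delta(w,z')=\langle\partial\rho(w)-\partial\rho(z),\,z'-z\rangle-\Delta(z,z')$, i.e.\ the increment is $z'-z$ rather than $z-z'$, but this changes only a sign and not the estimate $|\Delta(w,z)-\Delta(w,z')|\lesssim {\tt d}(z,z'){\tt d}(w,z)$); you also correctly identify the $\mathcal C^{*}(1)\in{\rm BMO}$ verification as the genuine crux of the $T(1)$ argument, which you leave unresolved, but since the paper itself only cites the result rather than proving it, your level of detail matches the paper's.
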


To obtain the $L^p(bD)$  boundedness, the main approach that Lanzani--Stein \cite{LS2014} used is first to obtain 
the kernel estimate for the Cauchy--Leray transform $ \mathcal C$ and then to use the $T(1)$ theorem. To be more specific,
let us take $K(w,z)$ to be the function defined for $w,z\in bD$, with $w\not= z$, by
$$K(w,z) = {1\over \Delta(w,z)^n}.$$
This function is the ``kernel" of the operator  $\mathcal C$, in the sense that
$$  \mathcal C(f)(z)= \int_{bD} K(w,z)f(w)d\lambda(w), $$
whenever $z$ lies outside of the support of $f$ and $f$ satisfies the H\"older-like condition for some $\alpha$.
The size and regularity estimates that are relevant for us are:
\begin{align}\label{ck}
& |K(w,z)|\lesssim {1\over{\tt d}(w,z)^{2n}};\nonumber\\
& |K(w,z) - K(w',z)|\lesssim  { {\tt d}(w,w')\over  {\tt d}(w,z)^{2n+1} },\quad {\rm if}\ {\tt d}(w,z)\geq c_K{\tt d}(w,w');\\
& |K(w,z) - K(w,z')|\lesssim  { {\tt d}(z,z')\over  {\tt d}(w,z)^{2n+1} },\quad {\rm if}\ {\tt d}(w,z)\geq c_K {\tt d}(z,z'),\nonumber
\end{align}
for an appropriate constant $c_K>0$. Moreover, for the size estimates we actually have
\begin{align}\label{kd}
 |K(w,z)|= {1\over{\tt d}(w,z)^{2n}}.
 \end{align}

\subsection{Boundedness and compactness of the Commutator $[b,\mathcal C]$}

\begin{proof}[Proof of Theorem \ref {Cauchy-Leray}]

We point out that the proof of Theorem \ref {Cauchy-Leray} follows from the proof of Theorem \ref {cauchy}, and in fact, it is simpler, since the operator $\mathcal C$ is a Calder\'on--Zygmund operator. We only sketch the proof here.

We first prove (1) of Theorem \ref {Cauchy-Leray}.

From Theorem \ref{thm2.1} and the size and smoothness conditions of the kernel $K$ above, we see that $\mathcal C$ is a Calder\'on--Zygmund operator. According to {\cite[Theorem 3.1]{KL1}},  
if $b\in{\rm BMO}(bD,d\lambda)$, we can obtain that $[b, \mathcal C]$ is bounded on  $L^p(bD, d\lambda)$.
Thus, it suffices to verify the sufficient condition.

To see this, assume that $b$ is in $L^1(bD,d\lambda)$ and that
$\big\| [b,\mathcal C] \big\|_{L^p(bD, d\lambda)\rightarrow L^p(bD, d\lambda)}<\infty$.
We now verify that $b$ is in ${\rm BMO}(bD,d\lambda)$.

In fact, by using the size and smoothness estimates as in \eqref{kd} and \eqref{ck}, following the argument in the proof of the sufficiency part of Theorem \ref{cauchy}, we see that there exist positive constants $\gamma_1$, $A_4$ and $A_5$ such that for every ball $B=B_r(z_0)\subset bD$ with $r<\gamma_1$,
there is another ball $\tilde B=B_r(w_0) \subset bD$ with 
$A_4r\leq{\tt d}(w_0,z_0)\leq (A_4+1) r$ such that at least one of the following properties holds:

$\mathfrak c)$ for every $z\in B$ and $w\in \tilde B$, $ K_1(w,z)$ does not change sign and $$| K_1(w,z)|\geq A_5 { 1\over  {\tt d}(w,z)^{2n} };$$

$\mathfrak d)$ for every $z\in B$ and $w\in \tilde B$, $ K_2(w,z)$ does not change sign and $$| K_2(w,z)|\geq A_5  {1\over  {\tt d}(w,z)^{2n} },$$
where $K_1(w,z)$ and $K_2(w,z)$ are the real and imaginary parts of $K(w,z)$, respectively. Here the constants $\gamma_1,$ $A_4$ and $A_5$ depend only on the implicit constants in \eqref{ck} and the quasi-distance ${\tt d}$.

  We test the ${\rm BMO}(bD,d\lambda)$ condition on the case of balls with big radius and small radius.

Case 1: In this case we work with balls with a large radius, $r\geq  \gamma_1$. 

By \eqref{lambdab} and by the fact that 
$\lambda(B)\geq \lambda( B_{ \gamma_1}(z_0)) \approx  \gamma_1^{2n}$, we obtain that
\begin{align*}
{1\over \lambda(B)} \int_B|b(w)-b_B|d\lambda(w)\lesssim  \gamma_1^{-2n} \|b\|_{L^1(bD, d\lambda)}.
\end{align*}

Case 2: In this case we work with balls $B$ with a small radius, $r< \gamma_1$.

Then, by using one of the arguments $\mathfrak c)$ and $\mathfrak d)$,  following the approach of the proof of Case 2 in the proof of Theorem \ref{cauchy}, we obtain that 
\begin{align*}
{1\over \lambda(B)} \int_B|b(w)-b_B|d\lambda(w)\lesssim \left\|[b,\mathcal C]\right\|_{L^p(bD, d\lambda)\to L^p(bD, d\lambda)}.
\end{align*}
This finishes the proof of (1) of Theorem \ref {Cauchy-Leray}.

Proof of (2): Necessary condition:  Since the kernel $K(\cdot, \cdot)$ of $\mathcal C$ is a standard kernel, by \cite[Theorem 1.1]{KL2}, $[b, \mathcal C]$ is compact on 
$L^p(bD,d\lambda)$. 
 The sufficient condition follows from the proof of (2) of Theorem \ref{Cauchy-Leray}.
\end{proof}

{We now provide a strict explanation to Remark \ref{rem2}.
\begin{proposition}\label{rem2prop}
Theorem \ref{Cauchy-Leray} holds for the $L^p$, BMO and VMO spaces
with respect to any measure $d\omega$ on $bD$ of the form $d\omega = \omega d\sigma$, where 
the density $\omega$ is a strictly positive continuous function on $bD$.
\end{proposition}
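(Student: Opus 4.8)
The plan is to reproduce, essentially verbatim, the argument given for Proposition \ref{rem1prop}: every ingredient in the proof of Theorem \ref{Cauchy-Leray} is insensitive to replacing the Leray--Levi measure $d\lambda$ by any comparable measure on $bD$.

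First I would record the comparability of the measures. By \cite[Proposition 3.4]{LS2014} we have $d\lambda(w) = \tilde\Lambda(w)\, d\sigma(w)$ with $c_1 \leq \tilde\Lambda(w) \leq c_2$ for $\sigma$-a.e.\ $w \in bD$. Since $bD$ is compact and $\omega$ is continuous and strictly positive on $bD$, there are constants $0 < c \leq \tilde c < \infty$ with $c \leq \omega(w) \leq \tilde c$ for all $w \in bD$; hence $d\omega = \omega\, d\sigma$ is comparable to $d\sigma$, and therefore to $d\lambda$. In particular $(bD, {\tt d}, d\omega)$ has the same null sets as $(bD, {\tt d}, d\lambda)$, it satisfies the ball estimate \eqref{lambdab-1} with the same exponent $2n$ (with adjusted constants), and it is again a compact space of homogeneous type. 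Consequently $L^p(bD, d\omega) = L^p(bD, d\lambda)$ with equivalent norms; and, since for two comparable measures $\mu_1 \approx \mu_2$ one has, for any constant $a$, $\tfrac{1}{\mu_2(B)}\int_B |b - b_{B,\mu_2}|\, d\mu_2 \leq \tfrac{2}{\mu_2(B)}\int_B |b-a|\, d\mu_2 \lesssim \tfrac{1}{\mu_1(B)}\int_B |b - b_{B,\mu_1}|\, d\mu_1$ uniformly in $B$, it follows that ${\rm BMO}(bD, d\omega) = {\rm BMO}(bD, d\lambda)$ and ${\rm VMO}(bD, d\omega) = {\rm VMO}(bD, d\lambda)$ as sets, with comparable seminorms and with the vanishing-oscillation condition \eqref{vmpc} preserved.

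Next I would point out that the proof of Theorem \ref{Cauchy-Leray} relies only on: (i) the $L^p$-boundedness of $\mathcal C$ coming from Theorem \ref{thm2.1} (here $\mathcal C$ is itself a Calder\'on--Zygmund operator, so no splitting into essential part and remainder is needed); (ii) the pointwise size and regularity bounds \eqref{ck}--\eqref{kd} for the kernel $K(w,z)$, which are purely geometric statements about $(bD,{\tt d})$ and do not involve any measure; and (iii) the maximal inequality, the John--Nirenberg inequality, the sharp-function inequality, and the median-value device of the sufficiency argument, all of which are valid on an arbitrary compact space of homogeneous type. The operator $\mathcal C$ does not change when we switch the measure used to test the ${\rm BMO}$/${\rm VMO}$ condition, and all of (i)--(iii) remain in force on $(bD, {\tt d}, d\omega)$; therefore one may repeat the proof of Theorem \ref{Cauchy-Leray} line by line with $d\lambda$ replaced throughout by $d\omega$, which yields the claim.

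I do not expect any genuine obstacle here. The only point deserving attention is precisely the identification of the ${\rm BMO}$ and ${\rm VMO}$ spaces attached to the two measures, since these spaces are defined through averages against the measure; this is handled by the elementary comparison of oscillation integrals recorded above. With that in place the proof is complete.
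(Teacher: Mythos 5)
Your argument is correct and matches the paper's proof: both reduce to observing that $d\omega$ is comparable to $d\lambda$ (hence the $L^p$, BMO, and VMO spaces coincide with equivalent norms), and that the proof of Theorem \ref{Cauchy-Leray} uses only the $L^p$-boundedness of $\mathcal C$ and the pointwise kernel estimates, none of which depends on the choice between equivalent measures. You supply a bit more detail (the explicit BMO-seminorm comparison and the itemized list of measure-independent ingredients), but the route is the same.
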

\begin{proof}
The proof of this proposition is similar to that of Proposition \ref{rem2prop}.
Suppose $d\omega$ is a measure on $bD$ of the form $d\omega = \omega d\sigma$, where 
the density $\omega$ is a strictly positive continuous function on $bD$. 
Following the proof of Proposition \ref{rem2prop}, we see that
the spaces 
$L^p(bD,d\omega)$, BMO$(bD,d\omega)$ and VMO$(bD,d\omega)$ are equivalent to 
$L^p(bD,d\lambda)$, BMO$(bD,d\lambda)$ and VMO$(bD,d\lambda)$.

Next, we recall that the proof of Theorem \ref{Cauchy-Leray} depends only on the $L^p(bD)$ boundedness of the Cauchy--Leray transform $\EuScript C$ and the pointwise  estimate of its kernel.

Hence, by repeating the proof of Theorem \ref{Cauchy-Leray}, we see that it holds for the
$L^p(bD,d\omega)$, BMO$(bD,d\omega)$ and VMO$(bD,d\omega)$ spaces.
\end{proof}
}

\section{A remark on the commutator of Cauchy--Szeg\H o operator on a bounded strictly pseudoconvex domain in $\mathbb C^n$ with smooth boundary}
\setcounter{equation}{0}

Let $D$ be a strictly pseudoconvex domain in $\C^n $ with smooth boundary and $\rho(z)$ be   
 a strictly pluri-superharmonic defining function for $D$.
Set
$$\psi(z,w)=\sum_{j=1}^n{\partial\rho\over \partial w_j}(z_j-w_j) +{1\over2} \sum_{j,k} {\partial^2\rho(w) \over \partial w_j \partial w_k} (z_j-w_j)(z_k-w_k), $$
 Then there is a positive number $\tilde\delta>0$ such that the Cauchy--Szeg\H o kernel $S(z,w)$ has the following form
 \begin{align}\label{szw}
 S(z,w)=F(z,w)\psi(z,w)^{-n}+G(z,w)\log\psi(z,w)
 \end{align}
 for all $(z,w)\in R_{\tilde \delta}=\{(z,w)\in bD\times bD: d(z,w)<\tilde\delta\}$, where $F, G\in C^\infty(bD\times bD)$ and $F(z,z)>0$ on $bD$, $d$ is the usual quasi-distance on $bD$ {(see for example \cite[p. 33]{STE2}).}
 According to \cite{FEF, NRSW, BMS}, the Cauchy--Szeg\H o kernel $S(z,w)\in C^\infty(bD\times bD\setminus \{(z,z): z\in bD\})$ is a standard kernel.

We define the Cauchy--Szeg\H o operator $T_S$ as the singular integral associated with the Cauchy--Szeg\H o kernel $S(z,w)$, i.e. 
$$T_S(f)(z)=\int_{bD}S(z,w)f(w)d\mu(w),$$
for suitable $f$ on $bD$, where $d\mu$ is the usual Lebesgue--Hausdorff surface measure on $bD$. We still use  $B_r(z)$ to denote the ball on $bD$ determined by the quasi-distance $d$, then $\mu(B_r(z))\approx r^n$ (c.f. \cite[p. 34]{STE2}, {\cite {KL2}}).

 From \eqref{g0}, we can see that if $|w-z|\leq \mu_0/2$ for some fixed small $\mu_0>0$, then 
$|\psi(z,w)|=|g_0(w,z)|$. 
 
We now provide another proof of Theorem A as stated in the introduction. 

\begin{proof}[Proof of Theorem A]
 
We point out that the proof here follows from the proof of Theorem \ref {cauchy}, and in fact, it is simpler, since the operator $S$ is a Calder\'on--Zygmund operator with a specific kernel. We only sketch the proof here.


Proof of (i):

It suffices to show the sufficient condition. To see this, assume that $b$ is in $L^1(bD)$ and that
$\big\| [b, T_S] \big\|_{L^p(bD)\rightarrow L^p(bD)}<\infty$.

Since $S(z,w)$ is a standard kernel, there exist positive constants $\epsilon$ and $A_6$ such that for every $(z,w)\in R_{\tilde \delta}$ with $w\not=z$,
\begin{align}\label{swz kernel}
\left\{
                \begin{array}{ll}
                  a)'\ \ |S(z,w)|\leq A_6 {\displaystyle1\over d(w,z)^{n}};\\[5pt]
                  b)'\ \ |S(z,w) - S(z,w')|\leq A_6 \big({ d(w,w')\over  d(w,z)}\big)^{\epsilon}{ 1\over  d(w,z)^{n} },\quad {\rm if}\ d(w,z)\geq c{ d}(w,w');\\[5pt]
                  c)'\ \ |S(z,w) - S(z',w)|\leq A_6  \big({ d(z,z')\over  d(w,z)}\big)^{\epsilon}{ 1\over  d(w,z)^{n} },\quad {\rm if}\ { d}(w,z)\geq c { d}(z,z')
                \end{array}
              \right.
\end{align}
for an appropriate constant $c>0$.

Then similar to the proof of Theorem \ref{cauchy}, there exist positive constants $\gamma_2$, $A_7$ and $A_8$ such that 
for every ball $B=B_r(z_0)\subset bD$ with $r<\gamma_2$, there exists another ball
$\tilde B = B_r(w_0)\subset bD$ with  $A_7r \leq {\tt d}(w_0,z_0) \leq (A_7+1)r$,
such that
at least one of the following properties holds:

$\mathfrak e)$ for every $z\in B$ and $w\in \tilde B$, $ S_1(z,w)$ does not change sign and $$| S_1(z,w)|\geq A_8{ 1\over  { d}(z,w)^{n} };$$

$\mathfrak f)$ for every $z\in B$ and $w\in \tilde B$, $ S_2(z,w)$ does not change sign and $$| S_2(z,w)|\geq A_8{ 1\over {d}(z,w)^{n} },$$
 where $S_1(z,w)$ and $S_2(z,w)$ are the real and imaginary parts of $S(z,w)$, respectively.

We test the ${\rm BMO}(bD,d\lambda)$ condition on the case of balls with big radius and small radius.

Case 1: In this case we work with balls with a large radius, $r\geq  \gamma_2$. 

By the fact that 
$\mu(B)\geq \mu( B_{  \gamma_2}(z_0)) \approx   \gamma_2^{n}$, we obtain that
\begin{align*}
{1\over \mu(B)} \int_B|b(w)-b_B|d\mu(w)\lesssim   \gamma_2^{-n} \|b\|_{L^1(bD)}.
\end{align*}

Case 2: In this case we work with balls $B$ with a small radius, $r<  \gamma_2$.

Then, following the approach of the proof of Case 2 in the proof of Theorem \ref{cauchy}, we obtain that 
\begin{align*}
{1\over \mu(B)} \int_B|b(w)-b_B|d\mu(w)\lesssim \left\|[b,T_S]\right\|_{L^p(bD)\to L^p(bD)}.
\end{align*}

Proof of (ii): the proof follows from the approach in the proof of (2) of Theorem \ref{cauchy} together with $\mathfrak e)$ or $\mathfrak f)$.

\color{black}

This finishes the proof of Theorem A.
\end{proof}

\bigskip

{\bf Acknowledgement:}
The authors would like to thank the referee for the helpful comments and suggestions on math and English, which makes the paper more accurate and clear.

 X. T. Duong, M. Lacey and J. Li are supported by ARC DP 160100153. X. Duong is also supported by Macquarie University Internal Grant 82184614.  J. Li is also supported by
Macquarie University Research Seeding Grant. 
 B. D. Wick's research is partially supported by National Science Foundation -- DMS \# 1800057 and DMS \# 1560955.
 Q. Y. Wu is supported by NSF of China (Grants No. 11671185 and No. 11701250), the Natural Science Foundation of Shandong Province (No. ZR2018LA002 and No. ZR2019YQ04) and the State Scholarship Fund of China (No. 201708370017).

\bibliographystyle{amsplain}

\end{document}